\newcommand{\Rr}{{\mathbb{R}}}
\newcommand{\Nn}{{\mathbb{N}}}
\newcommand{\Tt}{{\mathbb{T}}}
\newcommand{\epsi}{\varepsilon}
\def\leq{\leqslant}
\def\geq{\geqslant}
\numberwithin{equation}{section}
\newtheoremstyle{thmlemcorr}{10pt}{10pt}{\itshape}{}{\bfseries}{.}{10pt}{{\thmname{#1}\thmnumber{
#2}\thmnote{ (#3)}}}
\newtheoremstyle{thmlemcorr*}{10pt}{10pt}{\itshape}{}{\bfseries}{.}\newline{{\thmname{#1}\thmnumber{
\newtheoremstyle{defi}{10pt}{10pt}{\itshape}{}{\bfseries}{.}{10pt}{{\thmname{#1}\thmnumber{
#2}\thmnote{ (#3)}}}
\newtheoremstyle{remexample}{10pt}{10pt}{}{}{\bfseries}{.}{10pt}{{\thmname{#1}\thmnumber{
#2}\thmnote{ (#3)}}}
\newtheoremstyle{ass}{10pt}{10pt}{}{}{\bfseries}{.}{10pt}{{\thmname{#1}\thmnumber{
A#2}\thmnote{ (#3)}}}
\theoremstyle{thmlemcorr}
\newtheorem{theorem}{Theorem}
\numberwithin{theorem}{section}
\newtheorem{lemma}[theorem]{Lemma}
\newtheorem{corollary}[theorem]{Corollary}
\newtheorem{proposition}[theorem]{Proposition}
\theoremstyle{thmlemcorr*}
\newtheorem{theorem*}{Theorem}
\newtheorem{lemma*}[theorem]{Lemma}
\newtheorem{corollary*}[theorem]{Corollary}
\newtheorem{proposition*}[theorem]{Proposition}
\newtheorem{problem*}[theorem]{Problem}
\newtheorem{conjecture*}[theorem]{Conjecture}
\theoremstyle{defi}
\newtheorem{definition}[theorem]{Definition}
\newtheorem{claim}{Claim}
\theoremstyle{remexample}
\newtheorem{remark}[theorem]{Remark}
\newtheorem{teo}[theorem]{Theorem}
\newtheorem{lem}[theorem]{Lemma}
\theoremstyle{ass}
\begin{document}

\title[uniqueness structure of weakly coupled HJ systems]{ Uniqueness structure of weakly coupled systems of ergodic problems of Hamilton-Jacobi equations}

\author{Kengo Terai}
\address[K. Terai]{
        Department of Pure and Applied Mathematics, Graduate School of Fundamental Science and Engineering, Waseda University, Okubo 3-4-1, Shinjuku, Tokyo 169-8555, Japan.}
\email{ken5terai@akane.waseda.jp}

\keywords{Hamilton-Jacobi equations; Weakly coupled systems; Viscosity solutions; Nonlinear adjoint methods. }
\subjclass[2010]{
        35F21, 
        35A50, 
        37J50} 

\date{\today}

\begin{abstract}
Here, we address a uniqueness structure of viscosity solutions for ergodic problems of weakly coupled Hamilton-Jacobi systems. In particular, we study comparison principle with respect to generalized Mather measures as a generalization of \cite{MT}, which addressed the case of a single equation. To get the main result, it is important to construct Mather measures effectively. We overcome this difficulty by nonlinear adjoint methods. 

\end{abstract}

\maketitle

\section{Introduction}

In this paper, we consider the following weakly coupled Hamilton-Jacobi system: 
\begin{equation}\label{EP}
H(x,Dv(x,i),i)+\sum_{j=1}^m c_{ij}(v(x,i)-v(x,j))=\lambda \quad\mathrm{in}\  \Tt^d\times I,
\end{equation}
where $\Tt^d$ is the $d$-dimensional flat torus and we set $I:=\{1,2,...,m\}$, for fixed $m\in \Nn$. 
Here, $v: \Tt^d\times I \to \Rr$ and $\lambda \in \Rr$ is a pair of unknowns for \eqref{EP}.
For $i,j\in I$, $c_{ij}$ are given nonnegative constants and the Hamiltonian $H:\Tt^d \times \Rr^d \times I \to \Rr$ is a given function in $C^2(\Tt^d \times \Rr^d)$ for all $i\in I$, satisfying the following properties:

\begin{enumerate}[label=(A\arabic*)]
\item For every $x \in \Tt^d$ and $i \in I$, $p \mapsto H(x,p,i)$ is convex. \label{convex} 

\item
Uniformly for $x \in \Tt^d$ and $i\in I$,
\[ \lim_{|p| \to \infty}\frac{H(x,p,i)}{|p|}=\infty \quad \mathrm{and}\quad  \lim_{|p| \to \infty} \left( \frac{1}{2d}H(x,p,i)^2+D_xH(x,p,i)\cdot p \right)=\infty. \]
\label{coercive}

\item
There exists $C>0$ such that for all $(x,p,i)\in \Tt^d\times \Rr^d \times I$, $|D_xH(x,p,i)|\leq C(1+|p|^2)$.
\label{growth}

\item
 For all $i,j \in I$, $c_{ij}=c_{ji}$ .
\label{sym}
\end{enumerate}



 It is known that there exists unique $\lambda \in \Rr$ such that \eqref{EP} has viscosity solutions. Hence, by resetting the Hamiltonian, we can assume that the {\it ergodic constant} $\lambda=0$ without loss of generality; see \cite{cami2} and \cite{MT2}, for instance.

Weakly coupled Hamilton-Jacobi systems arise, for example, in the literature of optimal control problems with random switching costs, which are governed by specific Markov chains. These systems were discussed for a long time in the context of PDE theory; see \cite{davis}, \cite{FS}, and \cite{PW}, for instance. In particular, \cite{Enger} and \cite{Koike} established the framework of viscosity solutions for these systems.
To analyze the large time behavior of the solution for time-dependent problems, the ergodic problems like \eqref{EP} are derived; see \cite{cami2} and \cite{MT2}.

First, we recall the case of the single equation, that is $m=1$. It is known that \eqref{EP} has multiple solutions, not even up to constant (for example, \cite{MTbook} Chapter $6$). Therefore, it is important to investigate the structure of solutions for \eqref{EP}.  In \cite{fathi} and \cite{fathiantonio}, the above nonuniqueness phenomena were studied in the context of weak KAM theory. For a development of weak KAM theory, many researchers studied the structure of solutions and the large time behavior of the associated time-dependent problems; see \cite{fathi}, \cite{MTbook} and references therein.

In the last decade, weakly coupled Hamilton-Jacobi systems were studied from a view point of weak KAM theory. For example, \cite{cami2}, \cite{CGT}, and \cite{MT2}  investigated the large time behavior of the solution for time-dependent problems. In \cite{MT4}, the authors studied homogenization for weakly coupled systems and the rate of convergence to matched solutions. On the other hand, \cite{davi} generalized the notion of Aubry sets for the case of systems and proved comparison principle with respect to their boundary data on Aubry sets. In \cite{MSTY}, the authors characterized the subsolutions of the systems and showed explicit representation for subsolutions enjoying maximal property. We remark that \cite{CGT} and \cite{figalli} studied weakly coupled Hamilton-Jacobi system which is a different type from \eqref{EP}.

However, it is little known what kinds of conditions characterize uniqueness of solutions for \eqref{EP}. In the single case,  one approach, studied in \cite{fathi} and \cite{fathiantonio}, is to find a {\it uniqueness set}; that is, if two solutions coincide on this set, they are totally equal on the domain.
 Recently, in \cite{MT}, a new and simple way to find uniqueness sets was studied. In particular, the authors proved new comparison principle with respect to Mather measures in the single case.  In this paper, we consider the above for the case of a weakly coupled system as a generalization of \cite{MT}.

To present the main result, we recall the definition of a generalized Mather measure in \cite{gomes2}. 
Set $L:\Tt^d \times \Rr^d \times I \to \Rr$ by
\[L(x,q,i):=\sup_{p\in \Rr^d}\{p\cdot q-H(x,p,i)\}.\]
\begin{definition}
We define a generalized Mather measure associated with \eqref{EP} by a minimizer of the following minimizing problem:
\begin{equation}\label{mb}
\inf_{\mu \in \mathcal{F}} \int_{\Tt^d \times \Rr^d \times I} L(x,q,i) \mbox{ d}\mu(x,q,i),
\end{equation}
where $\mathcal{F}$ is the set of all Radon probability measures $P(\Tt^d \times \Rr^d \times I)$ satisfying,
\[\int_{\Tt^d \times \Rr^d \times I}q\cdot D\phi(x,i)+\Theta \phi(x,i)\mbox{ d}\mu(x,q,i)=0,\]
for all $\phi(\cdot,i) \in C^1(\Tt^d)$, where 
\[\Theta \phi (x,i):=\sum_{j=1}^m c_{ij}(\phi(x,i)-\phi(x,j)).\]

We denote the set of all generalized Mather measures by $\mathcal{\tilde M}$. 
\end{definition}

We remark that the infimum of \eqref{mb} is zero because we set $\lambda=0$. Indeed, we denote it later as Corollary \ref{mbp}.

The following is the main result of this paper:
\begin{teo}\label{result}
Let $v_1(x,i), v_2(x,i)$ be Lipschitz continuous viscosity solutions of \eqref{EP}. Assume that \ref{convex}-\ref{sym} hold. If
\[ \int_{\Tt^d \times \Rr^d \times I} v_1(x,i) \mbox{ }d\mu(x,q,i) \leq \int_{\Tt^d \times \Rr^d \times I} v_2(x,i) \mbox{ }d\mu(x,q,i), \]
for any $\mu \in \mathcal{\tilde M}$, then, $v_1(x,i) \leq v_2(x,i)$ for all $(x,i)\in \Tt^d \times I$.
\end{teo}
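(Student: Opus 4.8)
The plan is to adapt the nonlinear adjoint method from \cite{MT} to the coupled setting, using the generalized Mather measures constructed via \eqref{mb}. The strategy rests on the following duality: a generalized Mather measure $\mu$ must be ``supported on'' a structure that sees any viscosity solution $v$ of \eqref{EP} in a rigid way. Concretely, if $v$ is a Lipschitz viscosity solution and $\mu \in \tilde{\mathcal M}$, then I expect that $q = D_p H(x, Dv(x,i), i)$ holds $\mu$-a.e., i.e., $\mu$ is carried by the graph of the (subdifferential of the) solution, and moreover that $L(x,q,i) = Dv(x,i)\cdot q - H(x,Dv(x,i),i)$ $\mu$-a.e. (equality in the Legendre transform). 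This is the coupled analogue of the fact that Mather measures are closed measures minimizing the Lagrangian action. Granting this, for any solution $v$ one computes, using the constraint defining $\mathcal F$ with $\phi = v(\cdot,i)$ (which requires an approximation argument since $v$ is only Lipschitz, not $C^1$),
\[
0 = \int q\cdot Dv(x,i) + \Theta v(x,i) \; d\mu = \int \left( H(x,Dv,i) + L(x,q,i) \right) + \Theta v(x,i) \; d\mu = \int \left( \lambda - \Theta v + \Theta v \right) d\mu = \lambda,
\]
recovering Corollary \ref{mbp}; more importantly, the chain of inequalities $q\cdot Dv - H(x,Dv,i) \le L(x,q,i)$ together with the equation $H(x,Dv,i) + \Theta v = 0$ forces both the Legendre equality and, after integrating against $\mu$, that the defect $\int (L(x,q,i) - q\cdot Dv + H) \, d\mu = 0$, i.e. the measure concentrates on the optimal set.

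The heart of the argument is then the following. Suppose $v_1, v_2$ are two Lipschitz viscosity solutions with $\int v_1 \, d\mu \le \int v_2 \, d\mu$ for all $\mu \in \tilde{\mathcal M}$, but $\max_{(x,i)}(v_1 - v_2)(x,i) = \delta > 0$. I would regularize: consider the discounted or vanishing-viscosity approximations of \eqref{EP}, or more directly follow the nonlinear adjoint approach of introducing, for $\epsi > 0$, the adjoint system
\[
-\epsi \Delta \sigma^{\epsi}(x,i) - \div\!\big(\sigma^{\epsi}(x,i)\, D_pH(x,Dv^{\epsi}(x,i),i)\big) + \Big(\Theta^{*}\sigma^{\epsi}\Big)(x,i) = \text{(normalized source concentrated near the max of } v_1 - v_2),
\]
where $v^{\epsi}$ solves a regularized version of \eqref{EP} and $\Theta^{*}$ is the adjoint of $\Theta$ (which equals $\Theta$ under \ref{sym} since $c_{ij}=c_{ji}$ — this is precisely where symmetry enters). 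The family $\{\sigma^{\epsi}\}$ is a family of probability measures on $\Tt^d \times I$; one pushes them forward by $(x,i)\mapsto (x, D_pH(x,Dv^{\epsi},i), i)$ to get measures on $\Tt^d\times\Rr^d\times I$, and the key estimate — here the second coercivity condition in \ref{coercive} and the growth bound \ref{growth} are used, exactly as in \cite{MT} — gives a uniform second-moment bound $\int |q|^2 \, d\mu^{\epsi} \le C$, so that along a subsequence $\mu^{\epsi} \rightharpoonup \mu$ with $\mu \in \tilde{\mathcal M}$ (closedness of $\mu$ follows from passing to the limit in the adjoint equation). Because the source in the adjoint equation concentrates near the set where $v_1 - v_2$ attains its maximum $\delta$, the limiting Mather measure $\mu$ is supported where $v_1(x,i) - v_2(x,i) = \delta$, whence
\[
\int v_1 \, d\mu - \int v_2 \, d\mu = \delta > 0,
\]
contradicting the hypothesis. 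Therefore $\delta \le 0$ and $v_1 \le v_2$ everywhere.

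The main obstacle is the construction of the Mather measure that ``localizes'' at the maximum of $v_1 - v_2$: one must run the adjoint method on an auxiliary problem whose coefficients involve (a regularization of) $v_1$ or $v_2$ rather than a fixed Hamiltonian, control the measures $\sigma^\epsi$ uniformly in $\epsi$ (the tightness of the pushed-forward measures $\mu^\epsi$ via the second-order coercivity in \ref{coercive} and \ref{growth} is the technical crux, requiring a careful test-function computation against $v^\epsi$ itself), and then verify that the weak-$*$ limit genuinely lies in $\tilde{\mathcal M}$ and genuinely charges the contact set $\{v_1 - v_2 = \delta\}$. A secondary but real difficulty is the coupling term: one must check that the limiting holonomy/closedness constraint involving $\Theta$ survives the limit, and that the duality identity $\int(\Theta v)\,d\mu$ can be read off using only Lipschitz $v$ (via mollification of each component $v(\cdot,i)$ and the fact that $\Theta$ is a bounded linear operator with constant coefficients, so mollification commutes with it up to controllable errors). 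Once these pieces are in place, the contradiction argument closes the proof.
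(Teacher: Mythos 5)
Your high-level diagnosis of the tools (nonlinear adjoint method, role of symmetry $c_{ij}=c_{ji}$ in making $\Theta$ self-adjoint, use of \ref{coercive}--\ref{growth} for gradient bounds, mollification of the Lipschitz data) is all in the right spirit, and the side computation showing that $\int L\,d\mu=0$ forces the Legendre equality $\mu$-a.e.\ is a nice observation. However, the core of your argument has a genuine gap, and the mechanism you propose is not the one that works.

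First, the steady-state adjoint equation you write down,
\[
-\epsi\Delta\sigma^\epsi-\div\bigl(\sigma^\epsi D_pH(x,Dv^\epsi,i)\bigr)+\Theta\sigma^\epsi=\text{(normalized source)},
\]
cannot have a nonnegative solution of total mass one: integrating over $\Tt^d\times I$ and using \eqref{sum}, every term on the left integrates to zero, so the source must have total mass zero, contradicting ``normalized.'' The paper instead sets up a \emph{time-dependent} Cauchy problem \eqref{CPR} with rescaled time $\epsi\partial_t$ and viscosity $\epsi^4\Delta$, together with a backward adjoint problem \eqref{AJ} whose \emph{terminal} data is $\gamma_{ik}\delta_{x_0}$. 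Conservation of mass (Proposition \ref{l1}) then comes from the time derivative, which you have discarded.

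Second, and more fundamentally, your contradiction argument hinges on the unproved claim that the limit measure $\mu$ is \emph{supported} on the contact set $\{v_1-v_2=\delta\}$. There is no reason this should hold: the adjoint measure flows along the characteristics of $D_pH(x,Du^\epsi_2,i)$ and the limit is typically concentrated on the projected Aubry/Mather set of $H$, not on the maximizing set of $v_1-v_2$. The paper avoids needing any such support property. The key step is the \emph{inequality} \eqref{gg}:
\[
(u^\epsi_1-u^\epsi_2)(x_0,1,k)\ \leq\ \int_{\Tt^d\times\Rr^d\times I}(u^\epsi_1-u^\epsi_2)\,d\nu^\epsi,
\]
which follows from the convexity inequality \eqref{dd}, the time-dependent adjoint, and the self-adjointness of $\Theta$. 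Letting $\epsi\to0$ and using Proposition \ref{converge} yields, for \emph{every} $(x_0,k)$, a Mather measure $\mu$ with $v_1(x_0,k)-v_2(x_0,k)\leq\int(v_1-v_2)\,d\mu\leq0$. No localization, no contradiction argument, and no maximum point is needed. Your approach replaces a one-sided estimate that the adjoint method hands you for free with a much stronger localization statement that you neither prove nor need, and that is false in general.

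A smaller point: your opening computation $0=\int q\cdot Dv+\Theta v\,d\mu=\int(H+L)+\Theta v\,d\mu$ already assumes the Legendre equality $q\cdot Dv=H(x,Dv,i)+L(x,q,i)$ $\mu$-a.e., which is exactly what you set out to derive; it also uses pointwise differentiability of $v$ on $\supp\mu$. This circularity is harmless in the sense that the paper does not need that structural fact at all, but it signals that you are trying to extract more from $\mu$ than the proof requires.
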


In this paper, we regard the index of $m$-components system as a variable $i\in I$. This is a successful setting to discuss the above comparison result. 
To prove this Theorem, we use nonlinear adjoint methods (established in \cite{evans}) which fit nicely with the system structure. Another key point is to consider the Cauchy problem, not \eqref{EP} itself, for the system with initial data being approximations of solutions to \eqref{EP}. It is important noting that solutions of Cauchy problems are still quite close to that of \eqref{EP} (see Proposition \ref{converge}). This way, we are able to use the large time averaging effect of the Cauchy problem to introduce the adjoint problems, and then construct Mather measures in our setting.

As well as \cite{MT}, in light of Theorem \ref{result}, we can see that
\[ \mathcal{M}:=\overline{\bigcup_{\mu \in \tilde{\mathcal{M}}}\mathrm{supp}(\mathrm{proj}_{\Tt^d \times I}\mu)} \subset \Tt^d \times I \]
is a uniqueness set, that is,
\[ v_1=v_2 \:\: \mathrm{in}\: \mathcal{M} \quad \Rightarrow \quad v_1=v_2 \:\: \mathrm{in}\: \Tt^d \times I. \] 

This paper is organized as follows. In Section $2$, we provide some basic Lemmas. In Section $3$, we prove the main Theorem using nonlinear adjoint methods. Finally, we show an example of a generalized Mather measure defined by above in Section $4$. 
\section{Preliminaries}
In this section, we study the related Cauchy problem and adjoint problem for \eqref{EP} and give a priori estimate of solutions as preliminaries.
\subsection{Some properties of $\Theta$.}
Because $c_{ij}$ is symmetric, it holds the following identities.
\begin{lem}
Assume that \ref{sym} holds. Let $f,g: \Tt^d\times I \to \Rr$. Then, 
\begin{equation}\label{Theta} 
\int_{I} f(x,i)\Theta g(x,i)\mbox{ }di=\int_ {I} g(x,i) \Theta f(x,i) \mbox{ }di.
\end{equation}
\end{lem}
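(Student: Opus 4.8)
The plan is to unfold the definition of $\Theta$ and reduce the identity to the self-adjointness of the matrix $(c_{ij})$. Since $I=\{1,\dots,m\}$ is finite, $\int_I \cdot \, di$ is the sum $\sum_{i=1}^m$ (counting measure), so by definition
\[
\int_I f(x,i)\,\Theta g(x,i)\, di=\sum_{i=1}^m\sum_{j=1}^m c_{ij}\,f(x,i)\bigl(g(x,i)-g(x,j)\bigr)=\sum_{i,j}c_{ij}f(x,i)g(x,i)-\sum_{i,j}c_{ij}f(x,i)g(x,j).
\]
The first term on the right is already symmetric under interchanging the roles of $f$ and $g$ (it is $\sum_{i,j}c_{ij}f(x,i)g(x,i)=\sum_{i,j}c_{ij}g(x,i)f(x,i)$), so the whole statement reduces to showing $\sum_{i,j}c_{ij}f(x,i)g(x,j)=\sum_{i,j}c_{ij}g(x,i)f(x,j)$.

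For that last equality I would relabel the summation indices $i\leftrightarrow j$ in one of the double sums and then invoke \ref{sym}: $\sum_{i,j}c_{ij}f(x,i)g(x,j)=\sum_{i,j}c_{ji}f(x,j)g(x,i)=\sum_{i,j}c_{ij}g(x,i)f(x,j)$, where the middle step is the index swap and the last uses $c_{ij}=c_{ji}$. Recombining the two pieces and reading the computation for the right-hand side of \eqref{Theta} in the same way yields the claim.

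There is essentially no obstacle: the identity is just a finite-dimensional ``integration by parts'' expressing that $(c_{ij})$ acts as a symmetric operator on functions of $i\in I$, and since $\int_I\,di$ is counting measure the interchange of the two finite sums is automatic, with no regularity needed on $f$ and $g$. I would state \eqref{Theta} pointwise in $x\in\Tt^d$ for arbitrary $f,g:\Tt^d\times I\to\Rr$.
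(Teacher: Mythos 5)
Your proof is correct and follows essentially the same route as the paper: expand $\Theta$, split the double sum into a ``diagonal'' piece and a cross piece, and use the index swap $i\leftrightarrow j$ together with $c_{ij}=c_{ji}$ on the cross piece. The paper recombines the two sums into $\int_I g\,\Theta f\,di$ at the end rather than isolating the symmetric term first, but this is only a cosmetic difference in presentation.
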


\begin{proof} We have
\begin{align*}
\int_{I} f(x,i)\Theta g(x,i)\mbox{ }di&=\sum_{i=1}^m f(x,i)\sum_{j=1}^m c_{ij}(g(x,i)-g(x,j))\\ 
&=\sum_{i,j=1}^m c_{ij}(g(x,i)-g(x,j))f(x,i) \\ 
&=\sum_{i,j=1}^m c_{ij}g(x,i)f(x,i)-\sum_{i,j=1}^m c_{ij}g(x,j)f(x,i) \\ 
&=\sum_{i,j=1}^m c_{ij}g(x,i)f(x,i)-\sum_{i,j=1}^m c_{ij}g(x,i)f(x,j) \\ 
&=\sum_{i=1}^m g(x,i)\sum_{j=1}^m c_{ij}(f(x,i)-f(x,j))
=\int_ {I} g(x,i) \Theta f(x,i) \mbox{ }di,
\end{align*}
where we used \ref{sym} in the forth identity.
\end{proof}

\begin{lem}
Assume that \ref{sym} holds. Let $f: \Tt^d\times I \to \Rr$. Then, 
\begin{equation}\label{sum}
\int_I \Theta f(x,i)\mbox{ }di=0.
\end{equation}
\end{lem}

\begin{proof} Using \ref{sym} in the following third identity, we get \eqref{sum}:
\begin{align*}
\int_I \Theta f(x,i)\mbox{ }di&=\sum_{i,j=1}^m c_{ij}(f(x,i)-f(x,j))=\sum_{i,j=1}^m c_{ij}f(x,i)-\sum_{i,j=1}^m c_{ij}f(x,j)\\ \notag
&=\sum_{i,j=1}^m c_{ij}f(x,i)-\sum_{i,j=1}^m c_{ij}f(x,i)=0.
\end{align*}
\end{proof}

\subsection{Cauchy problem} Propositions in this subsection are obtained by standard arguments in the theory of viscosity solutions. However, we discuss them to make the paper self-contained. 

Let $v_l$ be viscosity solutions of \eqref{EP} for $l=1,2$. For $\delta>0$, set 
\begin{equation}\label{moli}
v^\delta_l(x,i):=\gamma^\delta*v_l(x,i)=\int_{\Rr^d} \gamma^\delta(y)v_l(x+y,i)\mbox{ }dy,
\end{equation}
 where $\gamma^\delta(y):=\delta^{-d}\gamma(\delta^{-1}y)$ for $y\in \Rr^d$ and $\gamma$ is a standard mollifier. Then, we get the following estimate.
\begin{lem}
Assume that \ref{coercive} and \ref{sym} hold. Let $v^\delta_l(x)$ defined as \eqref{moli}. Then, there exists $C>0$ independent of $\delta>0$ such that 
\begin{equation}\label{molibdd1}
\|Dv_l^\delta(\cdot,i)\|_{L^\infty(\Tt^d)}+\delta \|\Delta v_l^\delta(\cdot,i) \|_{L^\infty(\Tt^d)} \leq C.
\end{equation}
\end{lem}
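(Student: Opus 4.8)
The estimate on $\|Dv_l^\delta(\cdot,i)\|_{L^\infty}$ is immediate: since $v_l$ is a Lipschitz viscosity solution of \eqref{EP} (as assumed throughout), writing $C_0 := \max_i \|Dv_l(\cdot,i)\|_{L^\infty(\Tt^d)}$, differentiation under the integral in \eqref{moli} gives $Dv_l^\delta(x,i) = \int_{\Rr^d} \gamma^\delta(y) Dv_l(x+y,i)\,dy$, so $\|Dv_l^\delta(\cdot,i)\|_{L^\infty} \leq C_0$ with $C_0$ independent of $\delta$. (Alternatively, this Lipschitz bound on solutions of \eqref{EP} follows from the coercivity \ref{coercive}, which forces any viscosity subsolution to be Lipschitz with a bound depending only on $H$; either way the first term is handled with no work.)

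The real content is the bound $\delta\|\Delta v_l^\delta(\cdot,i)\|_{L^\infty(\Tt^d)} \leq C$. Here I would exploit that $v_l$ is a \emph{subsolution} of \eqref{EP} in the viscosity sense together with the convexity \ref{convex}, to get that the convolution $v_l^\delta$ is a subsolution of a mollified equation with a controlled error. Concretely: for each fixed $i$ and each $y$, $H(x+y, Dv_l(x+y,i), i) + \Theta v_l(x+y,i) \leq 0$ holds (in the viscosity, hence a.e., sense, as $v_l$ is Lipschitz). Integrating against $\gamma^\delta(y)\,dy$ and using Jensen's inequality with the convexity of $p \mapsto H(x,p,i)$ — after controlling the $x$-dependence error via \ref{growth}, which bounds $|D_x H|$ by $C(1+|p|^2)$ and hence, on the region $|p| \leq C_0$, by a constant — one obtains
\[
H(x, Dv_l^\delta(x,i), i) + \Theta v_l^\delta(x,i) \leq C_1 \delta \qquad \text{a.e. } x \in \Tt^d,
\]
for a constant $C_1$ depending only on $C_0$ and the data. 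Then one evaluates this at a point $x_0$ where $v_l^\delta(\cdot,i) - $ (something) is suitably extremal; the standard trick is to pick $x_0 \in \Tt^d$ where $\Delta v_l^\delta(\cdot,i)$ attains its maximum, but this does not directly interact with a first-order equation, so instead I expect to add the viscous term artificially.

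A cleaner route, and the one I would pursue, is to observe $v_l^\delta$ nearly solves a regularized equation and invoke the \emph{second} coercivity hypothesis in \ref{coercive}, namely $\frac{1}{2d}H(x,p,i)^2 + D_xH(x,p,i)\cdot p \to \infty$. This is precisely the condition that appears in Evans' nonlinear adjoint / gradient-shock estimates to control second derivatives: differentiating the equation twice, testing against the adjoint density, and using this bound gives $L^1$ (hence, by the structure, $L^\infty$ after a further argument) control on $\delta \Delta v^\delta$. But since here $v_l^\delta$ is an explicit mollification rather than a solution of a viscous PDE, the most economical argument is: let $w := v_l^\delta(\cdot,i)$; since $H(x, Dw, i) + \Theta v_l^\delta(x,i) \leq C_1\delta$ a.e. and $\|Dw\|_\infty \leq C_0$, and since $w$ is smooth, at an interior max $x_0$ of $w$ on $\Tt^d$ we have $Dw(x_0)=0$, $\Delta w(x_0) \leq 0$ — this controls $w$ from above but not $\Delta w$ pointwise. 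Therefore the honest proof must go through a viscous approximation: consider $w^\varepsilon$ solving $-\varepsilon \Delta w^\varepsilon + H(x, Dw^\varepsilon, i) + (\text{coupling}) = 0$, use the Bernstein method with the quantity $\frac{1}{2d}H^2 + D_xH\cdot Dw^\varepsilon$ from \ref{coercive} to bound $\varepsilon|\Delta w^\varepsilon|$, and pass to the limit identifying $w^\varepsilon \to v_l$; the mollification parameter $\delta$ plays the role of $\varepsilon$. The main obstacle is thus bookkeeping the coupling term $\Theta$ inside the Bernstein argument — but by \ref{sym} and \eqref{Theta}, \eqref{sum}, the coupling terms sum to zero over $i$, so summing the Bernstein inequality over $i \in I$ eliminates the problematic coupling contribution and yields the uniform bound $\delta\sum_i\|\Delta v_l^\delta(\cdot,i)\|_{L^\infty} \leq C$, from which \eqref{molibdd1} follows for each individual $i$.
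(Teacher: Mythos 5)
The first half of your argument (the bound on $\|Dv_l^\delta(\cdot,i)\|_{L^\infty}$ via the Lipschitz bound on $v_l$, which itself follows from \ref{coercive} and the fact that the coupling term is controlled after summing over $i$ via \eqref{sum}) is correct and matches the paper. The second half, however, goes badly astray. You declare that ``the honest proof must go through a viscous approximation'' with a Bernstein argument, but this is not needed and not what the paper does. The bound $\delta\|\Delta v_l^\delta(\cdot,i)\|_{L^\infty}\leq C$ has \emph{nothing to do with the PDE}: it is a pure mollification estimate. Writing $v_l^\delta=\gamma^\delta * v_l$ and integrating by parts once to place one derivative on $\gamma^\delta$ and one on $v_l$, one gets
\[
|\Delta v_l^\delta(x,i)|\ \leq\ \int_{\Rr^d}|D\gamma^\delta(y)|\,|Dv_l(x+y,i)|\,dy\ \leq\ \|Dv_l(\cdot,i)\|_{L^\infty}\,\|D\gamma^\delta\|_{L^1(\Rr^d)}\ \leq\ \frac{C}{\delta},
\]
since $\|D\gamma^\delta\|_{L^1}=\delta^{-1}\|D\gamma\|_{L^1}$ by scaling. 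That is the whole proof of the second term, and it uses only the Lipschitz bound on $v_l$ and standard mollifier scaling.

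Your proposed route is both unnecessary and internally incoherent: $v_l^\delta$ is an explicit convolution, not the solution of a viscous PDE, and there is no mechanism by which ``$\delta$ plays the role of $\varepsilon$'' for a hypothetical solution $w^\varepsilon$ of $-\varepsilon\Delta w^\varepsilon+H(x,Dw^\varepsilon,i)+\Theta w^\varepsilon=0$; the limits $w^\varepsilon\to v_l$ and $v_l^\delta\to v_l$ are a priori unrelated, and passing to a limit would in any case lose the quantitative $O(1/\delta)$ bound on the Laplacian, which is precisely what the lemma asserts. The Bernstein method using the second part of \ref{coercive} \emph{is} used in this paper, but for a different purpose: in Proposition \ref{bernstein} to obtain a Lipschitz bound on the solution $u^\epsi_l$ of the genuinely viscous Cauchy problem \eqref{CPR}, not to prove the present mollification estimate. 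You have conflated the two.
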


\begin{proof}
Applying \ref{coercive} and \eqref{sum} to \eqref{EP}, we can estimate
\begin{equation}\label{Dv}
 \|Dv_l(\cdot,i)\|_{L^\infty(\Tt^d)}\leq C.
\end{equation}
Hence, for $k\in \{1,2,...,d\}$ and $x\in \Tt^d$, we get
\[|(v^\delta_l)_{x_k}(x,i)|=\big|\int_{\Rr^d}(\gamma^\delta)_{x_k}(y)v_l(x+y,i)\mbox{ }dy\big|\leq \|Dv_l(\cdot,i)\|_{L^\infty(\Tt^d)} \int_{\Rr^d} |\gamma^\delta(y)|\mbox{ }dy\leq C.\]
To get the latter estimate in \eqref{molibdd1}, we calculate, for $x \in \Tt^d$,
\begin{align*}
|\Delta v^\delta_l(x,i)|\leq \int_{\Rr^d}|D\gamma^\delta(y)\cdot Dv_l(x+y,i)|\mbox{ }dy\leq \frac{C}{\delta^{d+1}}\int_{\Rr^d} |D\gamma(\frac{y}{\delta})|\mbox{ }dy=\frac{C}{\delta}\int_{\Rr^d}|D\gamma(z)|\mbox{ }dz\leq \frac{C}{\delta}.
\end{align*}
\end{proof}

In this subsection, we consider the following Cauchy problems:
\begin{equation}\label{CPR}
\begin{cases}
&\epsi (u^\epsi_l)_t+H(x,Du^\epsi_l,i)+\Theta u^\epsi_l=\epsi^4 \Delta u^\epsi_l \quad\rm{in}\  \Tt^d\times (0,1)\times I, \\
&u^\epsi_l(x,0,i)=v^{\epsi^4}_l(x,i) \quad \rm{in} \ \Tt^d \times I,
\end{cases}
\end{equation}
 for $l=1,2$, $\epsi>0$ and let $v^{\epsi^4}_l$ defined as \eqref{moli} with $\delta=\epsi^4$.
\begin{equation}\label{CP}
\begin{cases}
& \epsi(w^\epsi_l)_t+H(x,Dw^\epsi_l,i)+\Theta w^\epsi_l=0 \quad \rm{in}\  \Tt^d\times (0,1)\times I, \\
&w^\epsi_l(x,0,i)=v_l(x,i) \quad \rm{in} \ \Tt^d\times I.
\end{cases}
\end{equation}
Let $u^\epsi_l(x,t,i)$ and $w^{\epsi}_l(x,t,i)$ be the unique classical solution and viscosity solution of \eqref{CPR} and \eqref{CP}, respectively. It is obvious that the unique viscosity solution to \eqref{CP} is $w^\epsi_l=v_l$.
First, we investigate the difference between $u^{\epsi}_l(x,t,i)$ and $w^{\epsi}_l(x,t,i)$.

\begin{proposition}\label{converge}
Assume that \ref{coercive}-\ref{sym} hold. Then,
\[ \lim_{\epsi \to 0}\|u^{\epsi}_l(\cdot,i)-w^\epsi_l(\cdot,i)\|_{L^\infty(\Tt^d \times[0,1])}=0.\]
\end{proposition}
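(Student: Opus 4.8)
The plan is to read Proposition~\ref{converge} as a vanishing--viscosity stability estimate and prove it by doubling the space variable. Two preliminary facts enter. First, by \eqref{Dv} and the definition \eqref{moli} of the mollification, $\|v^{\epsi^4}_l-v_l\|_{L^\infty(\Tt^d\times I)}\le C\epsi^4\to0$. Second, thanks to the coercivity \ref{coercive} (this is exactly where its second condition is used) one has the uniform a priori bound $\|u^\epsi_l\|_{L^\infty}+\|Du^\epsi_l\|_{L^\infty}\le C_0$ on $\Tt^d\times[0,1]\times I$, with $C_0$ independent of $\epsi\in(0,1]$; this belongs to the a priori estimates of this section. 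Set $L:=\sup\{|D_xH(x,q,i)|:x\in\Tt^d,\ |q|\le C_0,\ i\in I\}$, finite by \ref{growth}. Since $w^\epsi_l=v_l$, it is enough to prove $\|u^\epsi_l-v_l\|_{L^\infty(\Tt^d\times[0,1]\times I)}\to0$, which we do by bounding $\sup(u^\epsi_l-v_l)$ and $\sup(v_l-u^\epsi_l)$ separately.

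For the first bound, fix $\eta:=\epsi^2$, $\sigma:=(d+LC_0+1)\epsi$, and let $(\bar x,\bar y,\bar t,\bar i)$ maximize
\[
\Phi(x,y,t,i):=u^\epsi_l(x,t,i)-v_l(y,i)-\frac{|x-y|^2}{2\eta}-\sigma t
\]
over $\Tt^d\times\Tt^d\times[0,1]\times I$. If $\bar t=0$, inserting $u^\epsi_l(\bar x,0,\bar i)=v^{\epsi^4}_l(\bar x,\bar i)$ and using \eqref{Dv} together with $a r-\frac{r^2}{2\eta}\le\frac{\eta a^2}{2}$ gives $\Phi(\bar x,\bar y,\bar t,\bar i)\le\|v^{\epsi^4}_l-v_l\|_{L^\infty}+\frac{\eta}{2}\|Dv_l\|_{L^\infty}^2$. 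If $\bar t>0$, I would use that $u^\epsi_l$ is a classical solution: optimality in $(x,t)$ gives $Du^\epsi_l(\bar x,\bar t,\bar i)=p:=\frac{\bar x-\bar y}{\eta}$, $\Delta u^\epsi_l(\bar x,\bar t,\bar i)\le\frac{d}{\eta}$ and $(u^\epsi_l)_t(\bar x,\bar t,\bar i)\ge\sigma$; optimality in $i$, with $c_{ij}\ge0$ and $u^\epsi_l(\bar x,\bar t,\bar i)-u^\epsi_l(\bar x,\bar t,j)\ge v_l(\bar y,\bar i)-v_l(\bar y,j)$ for every $j$, gives $\Theta u^\epsi_l(\bar x,\bar t,\bar i)\ge\Theta v_l(\bar y,\bar i)$; and optimality in $y$ makes $-\frac{|\bar x-\cdot|^2}{2\eta}+\mathrm{const}$ a test function touching $v_l(\cdot,\bar i)$ from below at $\bar y$ with gradient $p$, so the viscosity supersolution property for \eqref{EP} (with $\lambda=0$) gives $H(\bar y,p,\bar i)+\Theta v_l(\bar y,\bar i)\ge0$. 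Substituting into \eqref{CPR} at $(\bar x,\bar t,\bar i)$ and cancelling $\Theta v_l(\bar y,\bar i)$ against the last inequality yields
\[
\epsi\sigma\le\epsi^4\frac{d}{\eta}+\bigl(H(\bar y,p,\bar i)-H(\bar x,p,\bar i)\bigr)\le\epsi^4\frac{d}{\eta}+L\,|\bar x-\bar y|\le\epsi^4\frac{d}{\eta}+LC_0\,\eta,
\]
because $|p|\le C_0$ forces $|\bar x-\bar y|=\eta|p|\le C_0\eta$. With $\eta=\epsi^2$ this becomes $\sigma\le(d+LC_0)\epsi$, contradicting the choice of $\sigma$; hence $\bar t>0$ is impossible. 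Since in all cases $\Phi(\bar x,\bar y,\bar t,\bar i)\ge\sup_{\Tt^d\times[0,1]\times I}(u^\epsi_l-v_l)-\sigma$, we obtain $\sup(u^\epsi_l-v_l)\le\sigma+\|v^{\epsi^4}_l-v_l\|_{L^\infty}+\frac{\eta}{2}\|Dv_l\|_{L^\infty}^2\to0$.

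The opposite bound is obtained symmetrically from $\widetilde\Phi(x,y,t,i):=v_l(x,i)-u^\epsi_l(y,t,i)-\frac{|x-y|^2}{2\eta}-\sigma t$: if its maximum point $(\bar x,\bar y,\bar t,\bar i)$ has $\bar t>0$, optimality in $y$ gives $Du^\epsi_l(\bar y,\bar t,\bar i)=p:=\frac{\bar x-\bar y}{\eta}$ (with $|p|\le C_0$), $\Delta u^\epsi_l(\bar y,\bar t,\bar i)\ge-\frac{d}{\eta}$ and $(u^\epsi_l)_t(\bar y,\bar t,\bar i)\le-\sigma$; optimality in $i$ gives $\Theta u^\epsi_l(\bar y,\bar t,\bar i)\le\Theta v_l(\bar x,\bar i)$; and the subsolution property for \eqref{EP} gives $H(\bar x,p,\bar i)+\Theta v_l(\bar x,\bar i)\le0$. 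Inserting these into \eqref{CPR} at $(\bar y,\bar t,\bar i)$ leads again to $\sigma\le(d+LC_0)\epsi$, which is excluded, so the maximum is at $\bar t=0$ and one concludes $\sup(v_l-u^\epsi_l)\to0$ as before. Combining the two estimates gives $\|u^\epsi_l-v_l\|_{L^\infty(\Tt^d\times[0,1]\times I)}\to0$, which is the claim because $w^\epsi_l=v_l$.

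The crux of the argument is the joint choice of the three small parameters: the regularization contributes the term $\epsi^4/\eta$ and the oscillation of $H$ in $x$ over the quadratic penalization the term $\eta$, so one needs $\eta\to0$ slowly enough that $\epsi^4/\eta=o(\sigma)$ yet fast enough that $\eta=o(\sigma)$, all with $\sigma\to0$; this is possible precisely because the exponent $4$ in $\epsi^4\Delta$ dominates the factor $\epsi^{-1}$ implicit in the normalization of \eqref{CPR}, and $\eta=\epsi^2$, $\sigma\asymp\epsi$ works. The only other genuine difficulty is the $\epsi$-independent Lipschitz bound on $u^\epsi_l$, without which $|p|$ cannot be controlled and the $H$-difference term cannot be absorbed; this is exactly the point where hypothesis \ref{coercive} is indispensable.
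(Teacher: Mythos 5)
Your argument is structurally the same as the paper's (doubling of variables, quadratic penalization with $\eta=\epsi^2$, a time-penalty of order $\epsi$, and a contradiction argument to push the maximum to $t=0$), with one genuine simplification: since $u^\epsi_l$ is a classical solution, you differentiate it directly and use the quadratic penalization as a test function only for $v_l$, thereby dispensing with Ishii's Lemma, which the paper uses even though one of the two functions is smooth. That is a legitimate, slightly more elementary route.

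There is, however, a circularity in the way you bound $|p|$. You invoke the $\epsi$-uniform Lipschitz bound $\|Du^\epsi_l\|_{L^\infty}\le C_0$ as an a priori estimate ``of this section,'' but in the paper this bound is Proposition~\ref{bernstein}, whose proof rests on Lemma~\ref{bdd1}, whose proof in turn uses Proposition~\ref{converge} --- the very statement you are proving. As written, your proof therefore depends on a later result that depends on it. The fix is easy and is exactly what the paper does: bound $|p|$ through the Lipschitz constant of $v_l$, which is available from \eqref{Dv} \emph{before} this proposition. From the maximality $\Phi(\bar x,\bar x,\bar t,\bar i)\le\Phi(\bar x,\bar y,\bar t,\bar i)$ one gets
\[
\frac{|\bar x-\bar y|^2}{2\eta}\le v_l(\bar x,\bar i)-v_l(\bar y,\bar i)\le \|Dv_l\|_{L^\infty}\,|\bar x-\bar y|,
\]
hence $|\bar x-\bar y|\le 2\eta\|Dv_l\|_{L^\infty}$ and $|p|\le 2\|Dv_l\|_{L^\infty}$; the same manipulation works with $\widetilde\Phi$. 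With that replacement (and $L$ redefined as the supremum of $|D_xH|$ over $|q|\le 2\|Dv_l\|_{L^\infty}$, finite by \ref{growth}), the rest of your computation goes through unchanged, and the argument becomes self-contained.
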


\begin{proof}
To denote simply, we write $u^{\epsi}$ and $w^{\epsi}$ instead of $u^{\epsi}_l$ and $w^{\epsi}_l$. Define $\Phi: \Tt^d\times \Tt^d \times [0,1]\times I\to \Rr$ as 
\begin{equation*}
\Phi(x,y,t,i):=w^{\epsi}(x,t,i)-u^{\epsi}(y,t,i)-\frac{|x-y|^2}{2\eta}-Kt,
\end{equation*}
for $\eta>0$ and $K>0$ to be fixed later. Take $(x_0,y_0,t_0,i_0)\in \Tt^d\times \Tt^d\times [0,1] \times I$ such that $\Phi(x_0,y_0,t_0,i_0)=\max_{\Tt^d\times \Tt^d\times [0,1] \times I} \Phi$. We first prove

\begin{claim}
For sufficiently large $C'>0$, let 
  $K:=\frac{C'}{\epsi}\left(\eta+\frac{\epsi^4}{\eta}\right)$. Then, $t_0=0$.
 \end{claim}
 Suppose $0<t_0\leq1$. In light of Ishii's Lemma (see, \cite{users} Theorem 8.3), for any $\rho>0$, there exists $(a, p_0, X)\in \bar{J}^{2,+}w^{\epsi}(x_0,t_0,i_0)$ and $(b, p_0, Y)\in \bar{J}^{2,-}u^{\epsi}(y_0,t_0,i_0)$ such that 
 \[p_0:=\frac{x_0-y_0}{\eta},\]
 \[a-b=K,\]
 and
 \begin{equation}\label{matrix}
 \begin{pmatrix}
 X&0\\
 0 &-Y
 \end{pmatrix}
\leq \frac{1}{\eta}
\begin{pmatrix}
 I_n&-I_n\\
 -I_n &I_n
 \end{pmatrix}
+\frac{\rho}{\eta^2} 
\begin{pmatrix}
 I_n&-I_n\\
 -I_n &I_n
 \end{pmatrix}
 ^2.
 \end{equation}
 By the definition of viscosity solutions (see \cite{Koike}, Proposition2.3), for $0<t_0\leq 1$, we have
 \begin{equation*}
 \epsi b+H(y_0,p_0,i_0)+\Theta u^{\epsi}(y_0,t_0,i_0)\geq \epsi^4 \mathrm{tr}(Y),
 \end{equation*}
 and 
 \begin{equation*}
 \epsi a+H(x_0,p_0,i_0)+\Theta w^{\epsi}(x_0,t_0,i_0) \leq 0.
 \end{equation*}
  Hence
 \begin{equation}\label{ll}
 \epsi K+H(x_0,p_0,i_0)-H(y_0,p_0,i_0)+\Theta w^{\epsi}(x_0,t_0,i_0)-\Theta u^{\epsi}(y_0,t_0,i_0)\leq -\epsi^4\mathrm{tr}(Y).
 \end{equation}
 Note that, by \eqref{matrix},
 \begin{align*}
 -\epsi^4 \mathrm{tr}(Y)\leq \frac{C\epsi^4}{\eta}+C\rho,
 \end{align*}
 and
 \begin{align*}
 &\Theta w^{\epsi}(x_0,t_0,i_0)- \Theta u^{\epsi}(y_0,t_0,i_0)\\
 &=\sum_{j=1}^m c_{i_0j}\left\{ (w^{\epsi}(x_0,t_0,i_0)-w^{\epsi}(x_0,t_0,j))-(u^{\epsi}(y_0,t_0,i_0)-u^{\epsi}(y_0,t_0,j))\right\}\\
 &=\sum_{j=1}^m c_{i_0j}\left\{ (w^{\epsi}(x_0,t_0,i_0)-u^{\epsi}(y_0,t_0,i_0))-(w^{\epsi}(x_0,t_0,j)-u^{\epsi}(y_0,t_0,j))\right\}\\
 &=\sum_{j=1}^m c_{i_0j}\left\{ \Phi(x_0,y_0,t_0,i_0)-\Phi(x_0,y_0,t_0,j)\right\} \geq0.
 \end{align*}
 On the other hand, because $\Phi(y_0,y_0,t_0,i_0)\leq \Phi(x_0,y_0,t_0,i_0)$, we get 
 \[w^{\epsi}(y_0,t_0,i_0) -u^{\epsi}(y_0,t_0,i_0)-Kt_0 \leq w^{\epsi}(x_0,t_0,i_0)-u^{\epsi}(y_0,t_0,i_0)-\frac{|x_0-y_0|^2}{2\eta}-Kt_0,\]
 which implies $|p_0|\leq C$. Thus, $|x_0-y_0|\leq C\eta$. Therefore, in light of \ref{growth},
 \begin{equation*}
 |H(x_0,p_0,i_0)-H(y_0,p_0,i_0)|\leq C(1+|p_0|^2)|x_0-y_0|\leq C\eta.
 \end{equation*}
  Apply these estimates for \eqref{ll} to deduce
  \begin{equation*}
  \epsi K\leq C\eta+\frac{C\epsi^4}{\eta}+C\rho.
  \end{equation*}
Sending $\rho \to 0$ yields a contradiction, which finishes the proof of Claim $1$.\\ \vspace{3mm}

By the above claim, we get
\begin{equation*}
w^{\epsi}(x,t,i)-u^{\epsi}(x,t,i)-Kt=\Phi(x,x,t,i)\leq \Phi(x_0,y_0,0,i_0)\leq v(x_0,i_0)-v^{\epsi^4}(y_0,i_0).
\end{equation*}
Hence, we have
\begin{align*}
w^{\epsi}(x,t,i)-u^{\epsi}(x,t,i)
&\leq v(x_0,i_0)-v^{\epsi^4}(x_0,i_0)+v^{\epsi^4}(x_0,i_0)-v^{\epsi^4}(y_0,i_0)+K\\
&\leq o(1)+\|Dv^{\epsi^4}(\cdot,i_0)\|_{L^\infty(\Tt^d)}|x_0-y_0|+K.
\end{align*}
On the other hand, because $\Phi(y_0,y_0,0,i_0)\leq \Phi(x_0,y_0,0,i_0)$, we get $|x_0-y_0|\leq C\eta$.
Combine the above two inequalities, to imply 
\begin{equation*}
w^{\epsi}(x,t,i)-u^{\epsi}(x,t,i)\leq o(1)+ C\eta+K= o(1)+C\eta+\frac{C'}{\epsi}\left(\eta+\frac{\epsi^4}{\eta}\right).
\end{equation*}
Setting $\eta=\epsi^2$, it holds that 
\[w^{\epsi}(x,t,i)-u^{\epsi}(x,t,i)\leq o(1). \]
By symmetry, we obtain the opposite inequality.
\end{proof}
To prove the main result, Lipschitz bound for $u^{\epsi}$ is important. To get this, we prove the following Lemma.

\begin{lemma}\label{bdd1}
Assume that \ref{coercive}-\ref{sym} hold. There exists $C>0$ independent of $\epsi>0$ such that
\begin{equation}
\|\Theta u^{\epsi}_l(\cdot,i)\|_{L^\infty(\Tt^d\times[0,1])}+\|\epsi \frac{\partial u^{\epsi}_l}{\partial t}(\cdot,i)\|_{L^\infty(\Tt^d\times[0,1])}\leq C.
\end{equation}
\end{lemma}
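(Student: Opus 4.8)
The plan is to treat the two terms separately: the bound on $\Theta u^{\epsi}_l$ is an immediate consequence of Proposition \ref{converge}, while the bound on $\epsi\,\partial_t u^{\epsi}_l$ comes from differentiating \eqref{CPR} in time and running a maximum principle adapted to the coupling.

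For the first term, recall that the unique viscosity solution of \eqref{CP} is $w^{\epsi}_l=v_l$, so Proposition \ref{converge} gives $\|u^{\epsi}_l(\cdot,i)-v_l(\cdot,i)\|_{L^\infty(\Tt^d\times[0,1])}\to 0$ as $\epsi\to0$. Since $v_l$ is Lipschitz on the compact set $\Tt^d\times I$, it is bounded, hence $\|u^{\epsi}_l\|_{L^\infty(\Tt^d\times[0,1]\times I)}\le C$ for all sufficiently small $\epsi$, and therefore
\[
|\Theta u^{\epsi}_l(x,t,i)|=\Big|\sum_{j=1}^{m}c_{ij}\big(u^{\epsi}_l(x,t,i)-u^{\epsi}_l(x,t,j)\big)\Big|\le 2\Big(\max_{i}\sum_{j=1}^{m}c_{ij}\Big)\,\|u^{\epsi}_l\|_{L^\infty(\Tt^d\times[0,1]\times I)}\le C .
\]

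For the second term, set $z:=\partial_t u^{\epsi}_l$. Since $H$ has no explicit $t$-dependence, differentiating \eqref{CPR} in $t$ shows that, for each $i$, $z$ solves the linear parabolic equation
\[
\epsi\,\partial_t z+D_pH(x,Du^{\epsi}_l,i)\cdot Dz+\Theta z=\epsi^4\Delta z\qquad\text{in }\Tt^d\times(0,1]\times I ,
\]
the components $z(\cdot,\cdot,1),\dots,z(\cdot,\cdot,m)$ being coupled only through the zeroth-order term $\Theta$; here one uses that, $H$ being $C^2$ and $v^{\epsi^4}_l$ smooth, parabolic regularity makes $u^{\epsi}_l$ smooth for $t>0$ and $z$ continuous up to $t=0$ with value read off from \eqref{CPR} (to avoid differentiating at $t=0$ one runs the argument on $\Tt^d\times[\tau,1]\times I$ and lets $\tau\to0^+$). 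The point is that $\Theta$ is cooperative: at any point where $i\mapsto z(x,t,i)$ is maximal one has $\Theta z\ge0$, and $\Theta z\le0$ at a minimum in $i$. Hence a sliding maximum principle gives $\|z\|_{L^\infty(\Tt^d\times[0,1]\times I)}\le\|z(\cdot,0,\cdot)\|_{L^\infty(\Tt^d\times I)}$: for $\theta>0$ the continuous function $(x,t,i)\mapsto z(x,t,i)-\theta t$ cannot attain its maximum over $\Tt^d\times[0,1]\times I$ at a time $t_*>0$, because at such a point $Dz=0$, $\Delta z\le0$, $\partial_t z\ge\theta$ and $\Theta z\ge0$, which forces $\epsi\theta\le0$ in the equation, a contradiction; so the maximum is at $t=0$, and letting $\theta\to0^+$ (and arguing likewise with $-z$) gives the claim.

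Finally, \eqref{CPR} at $t=0$ reads
\[
\epsi\,\partial_t u^{\epsi}_l(x,0,i)=\epsi^4\Delta v^{\epsi^4}_l(x,i)-H\big(x,Dv^{\epsi^4}_l(x,i),i\big)-\Theta v^{\epsi^4}_l(x,i),
\]
and by \eqref{molibdd1} (with $\delta=\epsi^4$) the terms $\epsi^4\Delta v^{\epsi^4}_l$ and $Dv^{\epsi^4}_l$ are bounded uniformly in $\epsi$, so the same holds for $H(x,Dv^{\epsi^4}_l(x,i),i)$ by continuity of $H$ on compacts, while $|\Theta v^{\epsi^4}_l|\le C\|v_l\|_{L^\infty}\le C$; hence $\|\partial_t u^{\epsi}_l(\cdot,0,\cdot)\|_{L^\infty}\le C/\epsi$, which combined with the previous bound yields $\|\epsi\,\partial_t u^{\epsi}_l\|_{L^\infty(\Tt^d\times[0,1])}\le C$. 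The main obstacle is this second estimate — in particular setting up the maximum principle for the coupled system correctly (it hinges on the sign of $\Theta$ at an extremum in the index variable $i$ together with the $\theta t$ perturbation, there being no coercive zeroth-order term) and justifying the time-differentiation of the classical solution; the $\Theta u^{\epsi}_l$ bound, by contrast, is essentially free once Proposition \ref{converge} is available.
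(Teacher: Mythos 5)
Your proof is correct but takes a genuinely different route from the paper's, most notably for the time-derivative bound. For $\|\Theta u^{\epsi}_l\|_{L^\infty}$ the two arguments are near-identical in substance: the paper bounds $\|\Theta v_l\|_{L^\infty}$ from the stationary equation together with \eqref{Dv} and transfers it to $u^{\epsi}_l$ via Proposition \ref{converge}, while you bound $\|u^{\epsi}_l\|_{L^\infty}$ directly from Proposition \ref{converge} and the boundedness of $v_l$, then use that $\Theta$ is a bounded operator; both work. For $\epsi\|\partial_t u^{\epsi}_l\|_{L^\infty}$ the paper argues entirely at the level of the comparison principle for \eqref{CPR}: it uses \eqref{molibdd1} to show that $v^{\epsi^4}_l(x,i)\pm\frac{C}{\epsi}t$ are classical super/subsolutions, deduces $|u^{\epsi}_l(x,t,i)-v^{\epsi^4}_l(x,i)|\le \frac{C}{\epsi}t$, and then compares $u^{\epsi}_l(\cdot,\cdot+s,\cdot)$ with $u^{\epsi}_l$ (both solve the same equation with time-shifted data) to get $|u^{\epsi}_l(x,t+s,i)-u^{\epsi}_l(x,t,i)|\le\max_{x,i}|u^{\epsi}_l(x,s,i)-v^{\epsi^4}_l(x,i)|\le \frac{Cs}{\epsi}$, and divides by $s$. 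You instead differentiate \eqref{CPR} in $t$ and run a maximum principle on $z=\partial_t u^{\epsi}_l$, using the cooperativity of $\Theta$ at an index extremum and the $\theta t$ perturbation, then read off the initial bound from the equation at $t=0$ via \eqref{molibdd1}. These are, respectively, the finite-difference and infinitesimal forms of the same idea (control $\partial_t u$ at $t=0$, propagate by monotonicity in time). The paper's version has the small advantage that it never needs $z$ to be classical — comparison holds for viscosity/classical solutions of \eqref{CPR} without extra regularity of $\partial_t u^{\epsi}_l$ — whereas your version needs parabolic smoothness of $u^{\epsi}_l$ up to $t=0$ to justify differentiating and passing $\tau\to 0^+$; you correctly flag this, and it is justified here because $v^{\epsi^4}_l$ is smooth, but it is an extra step the paper avoids.
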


\begin{proof}
First, from \eqref{EP} and \eqref{Dv}, we have $\|\Theta v_l(\cdot,i)\|_{L^\infty(\Tt^d)}\leq C$.
By Proposition \ref{converge}, we get
 $\|\Theta u^{\epsi}_l(\cdot,i)\|_{L^\infty(\Tt^d\times[0,1])}\leq C$. On the other hand, by  \eqref{molibdd1}, for a suitably large $C>0$, each $u^{\pm}_l(x,t,i):= v^{\epsi^4}_l(x,i)\pm \frac{C}{\epsi}t$ is a classical supersolution and subsolution for \eqref{CPR}, respectively. By comparison, we get $v^{\epsi^4}_l(x,i)- \frac{C}{\epsi}t\leq u^\epsi(x,t,i)\leq v^{\epsi^4}_l(x,i)+ \frac{C}{\epsi}t$ for any $(x,t,i)\in \Tt^d\times [0,1] \times I$. Use comparison again to yield
\begin{equation*}
u^\epsi (x,t+s,i)-u^\epsi(x,t,i)\leq \max_{x\in \Tt^d}|u^\epsi(x,s,i)-v^{\epsi^4}_l(x,i)|.
\end{equation*}
Hence, we get $\|\epsi \frac{\partial u^\epsi}{\partial t}(\cdot,i)\|_{L^\infty(\Tt^d\times[0,1])}\leq C$.
\end{proof}

By Lemma \ref{bdd1}, we get Lipschitz bound for $u^{\epsi}$ using Bernstein's method.

\begin{proposition} \label{bernstein}
Assume that \ref{coercive}-\ref{sym} hold. There exists $C>0$ independent of $\epsi>0$ such that
\[\|Du^{\epsi}_l(\cdot,i)\|_{L^\infty(\Tt^d\times[0,1])}\leq C.\]
\end{proposition}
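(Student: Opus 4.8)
Proof proposal. The approach is Bernstein's method applied to the auxiliary function $\phi:=|Du^{\epsi}_l|^2$. Fix $l$, write $u:=u^{\epsi}_l$, and assume $\epsi\in(0,1)$. For fixed $\epsi$ the equation in \eqref{CPR} is uniformly parabolic with a $C^2$ Hamiltonian, so $u$ is regular enough (by interior parabolic Schauder theory, or, if one prefers to be careful, after replacing $H$ by smooth Hamiltonians converging to $H$ in $C^2$ on compacts and keeping the final estimate independent of the approximation) to differentiate \eqref{CPR} once more in $x$. Differentiating in $x_k$, multiplying by $2u_{x_k}$, summing over $k$, and using $\Delta(|Du|^2)=2|D^2u|^2+2Du\cdot D(\Delta u)$ together with $(\Theta u)_{x_k}=\Theta(u_{x_k})$, one obtains the identity
\[
\epsi\phi_t+D_pH(x,Du,i)\cdot D\phi-\epsi^4\Delta\phi
=-2D_xH(x,Du,i)\cdot Du-2\sum_{j=1}^m c_{ij}\big(|Du(x,i)|^2-Du(x,i)\cdot Du(x,j)\big)-2\epsi^4|D^2u|^2 .
\]

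Next I would localize at a maximum point $(x_0,t_0,i_0)$ of $\phi$ over $\Tt^d\times[0,1]\times I$. If $t_0=0$ then $\phi\le\|Dv^{\epsi^4}_l(\cdot,i_0)\|_{L^\infty(\Tt^d)}^2\le C$ by \eqref{molibdd1}, and we are done; so assume $t_0>0$. There $\phi_t\ge0$, $D\phi=0$ and $\Delta\phi\le0$, so the left-hand side of the identity is nonnegative; moreover, since $\phi(x_0,t_0,i_0)\ge\phi(x_0,t_0,j)$ for every $j$, Cauchy--Schwarz gives $|Du(x_0,i_0)|^2-Du(x_0,i_0)\cdot Du(x_0,j)\ge|Du(x_0,i_0)|\big(|Du(x_0,i_0)|-|Du(x_0,j)|\big)\ge0$, so (using $c_{ij}\ge0$) the coupling sum is nonnegative. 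Consequently, at $(x_0,t_0,i_0)$,
\[
\epsi^4|D^2u|^2+D_xH(x_0,Du,i_0)\cdot Du\le 0 .
\]

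The crux is to turn this into the structural quantity appearing in \ref{coercive}. From \eqref{CPR}, $\epsi^4\Delta u=\epsi u_t+H(x_0,Du,i_0)+\Theta u$ at $(x_0,t_0,i_0)$, and by Lemma \ref{bdd1} we have $|\epsi u_t+\Theta u|\le C$ with $C$ independent of $\epsi$. Using $|D^2u|^2\ge\frac1d(\Delta u)^2$ and $\epsi\le1$ gives $\epsi^4|D^2u|^2\ge\frac1d(\epsi u_t+H+\Theta u)^2$, hence, once $H(x_0,Du,i_0)$ exceeds a threshold depending only on the constant of Lemma \ref{bdd1}, $\epsi^4|D^2u|^2\ge\frac1{2d}H(x_0,Du,i_0)^2$. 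Combined with the inequality of the previous paragraph this yields
\[
\frac1{2d}H(x_0,Du,i_0)^2+D_xH(x_0,Du,i_0)\cdot Du\le 0 ,
\]
and the second limit in \ref{coercive} then bounds $|Du(x_0,t_0,i_0)|$ by a universal constant; if instead $H(x_0,Du,i_0)$ lies below the threshold, the first (coercivity) limit in \ref{coercive} bounds $|Du(x_0,t_0,i_0)|$ directly. In either case $\|Du(\cdot,i)\|_{L^\infty(\Tt^d\times[0,1])}^2=\phi(x_0,t_0,i_0)\le C$ with $C$ independent of $\epsi$.

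I expect the main obstacle to be precisely this last conversion. Bernstein's method produces only one genuinely good term, $-2\epsi^4|D^2u|^2$, which a priori controls just $(\Delta u)^2$, hence $(\epsi u_t+H+\Theta u)^2$; the role of Lemma \ref{bdd1} and of the exact form of \ref{coercive} is that this is already enough to reconstruct $\frac1{2d}H^2+D_xH\cdot p$ and absorb the bad term $D_xH\cdot Du$. By contrast, the remaining points — justifying the extra differentiation of \eqref{CPR} and checking that the $\Theta$-coupling is harmless at an interior maximum — are routine.
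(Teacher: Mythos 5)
Your proof is correct and follows essentially the same route as the paper: apply Bernstein's method to (a multiple of) $|Du^\epsi|^2$, locate the maximum, use the equation together with Lemma \ref{bdd1} to bound $\epsi^4|D^2u|^2$ from below by $\tfrac{1}{2d}H^2$ up to a controlled error, observe that the coupling term is nonnegative at the maximum, and conclude from \ref{coercive}. The only differences are cosmetic: you use $|Du|^2$ rather than $\tfrac12|Du|^2$, you establish the nonnegativity of the coupling term via Cauchy--Schwarz rather than by completing the square (both standard and equally short), and you phrase the passage from $\epsi^4|D^2u|^2\ge\frac1d(\epsi u_t+H+\Theta u)^2$ to a bound on $\frac{1}{2d}H^2+D_xH\cdot Du$ as a threshold dichotomy rather than absorbing the error into a constant $C$ on the right-hand side, as the paper does.
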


\begin{proof}
Take $k\in \{1,2,...,d\}$. In this proof, we denote $u^{\epsi}$ instead of $u_l^{\epsi}$. Differentiate \eqref{CPR} with respect to $x_k$ to get
\begin{equation*}\epsi u^\epsi_{tx_k}(\cdot,i)+H_{x_k}+D_pH\cdot Du^\epsi_{x_k}(\cdot,i)+\Theta u^\epsi_{x_k}(\cdot,i)=\epsi^4 \Delta u^\epsi_{x_k}(\cdot,i).
\end{equation*}
Multiplying by $u^\epsi_{x_k}(\cdot,i)$ and summing up with respect to $k$, we obtain
\begin{align*}
&\sum_{k=1}^d D_pH\cdot Du^\epsi_{x_k}(\cdot,i)u^\epsi_{x_k}(\cdot,i)+u^\epsi_{x_k}(\cdot,i)\Theta u^\epsi_{x_k}(\cdot,i)\\
& \:\:\:\:\: \epsi(\frac{1}{2}|Du^\epsi(\cdot,i)|^2)_t+D_xH\cdot Du^\epsi(\cdot,i)=\sum_{k=1}^d \epsi^4 \Delta u^\epsi_{x_k}(\cdot,i)u^\epsi_{x_k}(\cdot,i).
\end{align*}
Then, we can rewrite this as
\begin{align*}
&\epsi \psi_t +D_xH\cdot Du^\epsi(\cdot,i)+D_pH\cdot D\psi+\sum_{k=1}^d u^\epsi_{x_k}(\cdot,i)\Theta u^\epsi_{x_k}(\cdot,i)
=\epsi^4\{\Delta \psi -|D^2u^\epsi(\cdot,i)|^2\},
\end{align*}
where $\psi(x,t,i)=\frac{1}{2}  |Du^\epsi(x,t,i)|^2$.
Take $(x_0,t_0,i_0)\in \Tt^d\times[0,1]\times I$ as a maximum point of $\psi$. In the case $t_0=0$, it holds that
\[ \|Du^\epsi(\cdot,i)\|_{L^\infty(\Tt^d \times [0,1])}\leq \|Dv^{\epsi^4}(\cdot,i_0)\|_{L^\infty(\Tt^d)} \leq C. \]
Hence, it suffices to prove the case $t_0\in (0,1]$. Then,
\begin{equation}\label{asd}
\epsi^4|D^2u^\epsi(x_0,t_0,i_0)|^2+D_xH\cdot Du^\epsi(x_0,t_0,i_0) + \sum_{k=1}^d u^\epsi_{x_k}(x_0,t_0,i_0)\Theta u^\epsi_{x_k}(x_0,t_0,i_0)\leq0.
\end{equation}
Using Lemma \ref{bdd1}, we get 
\begin{align*}
&\epsi^4|D^2u^\epsi(x_0,t_0,i_0)|^2\geq \epsi^8|D^2u^\epsi(x_0,t_0,i_0)|^2 \geq \frac{\epsi^8}{d}|\Delta u^\epsi(x_0,t_0,i_0)|^2\\
&=\frac{1}{d}\left\{\epsi u^\epsi_t(x_0,t_0,i_0)+H(x_0,Du^\epsi(x_0,t_0,i_0)i_0)+\Theta u^\epsi(x_0,t_0,i_0)\right\}^2
\geq \frac{1}{2d}H(x_0,Du^\epsi(x_0,t_0,i_0),i_0)^2-C.
\end{align*}
Applying the above inequality to \eqref{asd}, we get 
\begin{equation}\label{bsd}
\frac{1}{2d}H(x_0,Du^\epsi(x_0,t_0,i_0),i_0)^2+D_xH\cdot Du^\epsi(x_0,t_0,i_0)+ \sum_{k=1}^d u^\epsi_{x_k}(x_0,t_0,i_0)\Theta u^\epsi_{x_k}(x_0,t_0,i_0) \leq C.
\end{equation}
Also, we can see 
\begin{align*}
& \sum_{k=1}^d u^\epsi_{x_k}(x_0,t_0,i_0)\Theta u^\epsi_{x_k}(x_0,t_0,i_0)
 =\sum_{k=1}^d \sum_{j=1}^m c_{i_0j}u^\epsi_{x_k}(x_0,t_0,i_0) \big\{u^\epsi_{x_k}(x_0,t_0,i_0)-u^\epsi_{x_k}(x_0,t_0,j)\big\} \\
& = \sum_{k=1}^d \sum_{j=1}^m c_{i_0j} \left\{ \frac{1}{2}\big(u^\epsi_{x_k}(x_0,t_0,i_0)\big)^2-u^\epsi_{x_k}(x_0,t_0,i_0)u^\epsi_{x_k}(x_0,t_0,j)+\frac{1}{2}\big(u^\epsi_{x_k}(x_0,t_0,i_0)\big)^2 \right\} \\
& \geq \sum_{k=1}^d \sum_{j=1}^m c_{i_0j} \left\{ \frac{1}{2}\big(u^\epsi_{x_k}(x_0,t_0,i_0)\big)^2-u^\epsi_{x_k}(x_0,t_0,i_0)u^\epsi_{x_k}(x_0,t_0,j)+\frac{1}{2}\big(u^\epsi_{x_k}(x_0,t_0,j)\big)^2 \right\} \\
& =\sum_{k=1}^d \sum_{j=1}^m \frac{1}{2}c_{i_0j} \big\{u^\epsi_{x_k}(x_0,t_0,i_0)-u^\epsi_{x_k}(x_0,t_0,j)\big\}^2 \geq0.
\end{align*} 
Hence, we obtain
\begin{equation*}
  \frac{1}{2d}H(x_0,Du^\epsi(x_0,t_0,i_0),i_0)^2+D_xH\cdot Du^\epsi(x_0,t_0,i_0)\leq C.
\end{equation*}
In light of \ref{coercive}, we get the conclusion.
\end{proof}

\subsection{Adjoint problem}
Fix $x_0\in \Tt^d$ and $k \in I$. Then, consider 
\begin{equation}\label{AJ}
\begin{cases}
& -\epsi \sigma^\epsi_t-\mathrm{div}(\sigma^\epsi D_pH(x,Du^{\epsi}_2,i))+\Theta \sigma^\epsi=\epsi^4 \Delta \sigma^\epsi \quad\rm{in}\  \Tt^d\times (0,1)\times I, \\
&\sigma^\epsi(x,1,i)=\gamma_{ik}\delta_{x_0} \quad \rm{in} \ \Tt^d \times I,
\end{cases}
\end{equation}
 where $\gamma_{ik}$ denotes the Kronecker delta and $\delta_{x_0}$ is the Dirac delta mass at $x_0$. Let $\sigma^\epsi(\cdot, i)$ be the solution of this problem.
 
 In this subsection, we recall that $\sigma^\epsi$ is nonnegative and preserves its total mass.
 \begin{proposition}\label{l1}
 For all $t\in [0,1]$ and $\epsi>0$, $\sigma^\epsi \geq0$ and
 \begin{equation*}
 \int_{\Tt^d\times I} \sigma^\epsi(x,t,i) \mbox{ }dxdi =1.
 \end{equation*}
 \end{proposition}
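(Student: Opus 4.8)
The plan is to establish the two assertions—nonnegativity and mass conservation—by exploiting the structure of the adjoint equation \eqref{AJ} together with the identities \eqref{Theta} and \eqref{sum} for the coupling operator $\Theta$. Since the terminal data $\gamma_{ik}\delta_{x_0}$ is only a measure, I would first note that the equation \eqref{AJ} is a backward linear parabolic system with smooth coefficients (by Proposition \ref{bernstein}, $D_pH(x,Du^\epsi_2,i)$ is bounded and—after the standard mollification implicit in the construction—smooth enough), so for $t<1$ the solution $\sigma^\epsi(\cdot,t,i)$ is a genuine smooth function; the Dirac mass is instantaneously regularized by the $\epsi^4\Delta$ term. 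All manipulations below are then carried out for $t\in[0,1)$ and the endpoint $t=1$ is recovered by weak-$*$ continuity.

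For mass conservation, I would integrate \eqref{AJ} over $\Tt^d\times I$. The divergence term integrates to zero over $\Tt^d$ (periodic boundary), the Laplacian term integrates to zero likewise, and the coupling term vanishes by \eqref{sum}: $\int_I \Theta\sigma^\epsi(x,t,i)\,di=0$. Hence $\frac{d}{dt}\int_{\Tt^d\times I}\sigma^\epsi(x,t,i)\,dx\,di=0$, so the total mass is constant in $t$; evaluating at $t=1$ gives $\int_{\Tt^d\times I}\gamma_{ik}\delta_{x_0}\,dx\,di=\sum_{i\in I}\gamma_{ik}=1$, which yields the claimed identity.

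For nonnegativity, I would argue by a maximum-principle/duality argument run backward in time. The cleanest route is duality: pair $\sigma^\epsi$ against the solution $\phi^\epsi$ of the forward linearized problem $\epsi\phi^\epsi_t + D_pH(x,Du^\epsi_2,i)\cdot D\phi^\epsi + \Theta\phi^\epsi = \epsi^4\Delta\phi^\epsi$ with nonnegative initial data $\phi^\epsi(\cdot,0,i)=\psi_i\geq 0$; a computation using \eqref{Theta} to transfer $\Theta$ shows $\frac{d}{dt}\int_{\Tt^d\times I}\sigma^\epsi\phi^\epsi\,dx\,di=0$, so $\int \sigma^\epsi(\cdot,1,\cdot)\phi^\epsi(\cdot,1,\cdot) = \int \sigma^\epsi(\cdot,0,\cdot)\psi$. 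Since the forward problem satisfies a maximum principle (the $\Theta$ term is of "monotone" type: at an interior negative minimum over $\Tt^d\times I$ the coupling term has the right sign, exactly as in the $\Theta$-computations already carried out in Proposition \ref{converge}), $\phi^\epsi\geq 0$ on all of $\Tt^d\times[0,1]\times I$; hence $\int \sigma^\epsi(\cdot,0,\cdot)\psi = \sum_i \phi^\epsi(x_0,1,i)\gamma_{ik}=\phi^\epsi(x_0,1,k)\geq 0$ for every nonnegative $\psi$, which forces $\sigma^\epsi(\cdot,0,\cdot)\geq 0$, and then by the same argument run from an arbitrary intermediate time, $\sigma^\epsi(\cdot,t,\cdot)\geq 0$ for all $t$.

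The main obstacle is the low regularity of the terminal datum: one cannot directly apply the strong maximum principle or Ishii's lemma to a measure-valued profile, so care is needed to justify that $\sigma^\epsi$ is a nonnegative function for $t<1$ and that the duality pairing and the mass integral are well-defined and differentiable in $t$. I would handle this either by approximating $\delta_{x_0}$ by smooth nonnegative mollifiers $\delta_{x_0}^\eta$, proving the statements for the regularized problems where all integrations by parts are legitimate, and passing to the limit $\eta\to 0$ using stability of the linear parabolic system; or, alternatively, by invoking standard parabolic regularity to assert $\sigma^\epsi\in C^\infty(\Tt^d\times[0,1)\times I)$ directly. Everything else—the two integration-by-parts computations and the sign of the $\Theta$ contribution—is routine given the identities already proved in Section 2.
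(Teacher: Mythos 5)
Your argument for mass conservation (integrating \eqref{AJ} over $\Tt^d\times I$ and invoking \eqref{sum}, the periodicity, and the terminal datum) is exactly the paper's.

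For the nonnegativity, you pair $\sigma^\epsi$ with the solution $\phi^\epsi$ of the \emph{linearized} forward system
\[
\epsi\phi^\epsi_t + D_pH(x,Du^\epsi_2,i)\cdot D\phi^\epsi + \Theta\phi^\epsi = \epsi^4\Delta\phi^\epsi,
\]
whereas the paper pairs $\sigma^\epsi$ with the \emph{nonlinear} forward system \eqref{RAJ}, whose first-order term is $H(x,Dz^\epsi,i)$ rather than $D_pH(x,Du^\epsi_2,i)\cdot Dz^\epsi$. This is a genuine difference, and your choice is the one under which the duality is exact. Indeed, multiplying the forward equation by $\sigma^\epsi$, multiplying \eqref{AJ} by the forward solution, subtracting, and integrating over $\Tt^d\times I$ (using \eqref{Theta} to cancel the coupling terms and integration by parts to cancel the viscosity terms), the transport term $\int_{\Tt^d\times I}\mathrm{div}(\sigma^\epsi D_pH(x,Du^\epsi_2,i))\,\phi\,dx\,di$ is cancelled only by $\int_{\Tt^d\times I}\sigma^\epsi\,D_pH(x,Du^\epsi_2,i)\cdot D\phi\,dx\,di$; with the nonlinear \eqref{RAJ} one instead obtains the residual $\int_{\Tt^d\times I}\bigl[H(x,Dz^\epsi,i)-D_pH(x,Du^\epsi_2,i)\cdot Dz^\epsi\bigr]\sigma^\epsi\,dx\,di$, which need not vanish, so the identity $\tfrac{d}{dt}\int z^\epsi\sigma^\epsi\,dx\,di=0$ claimed in the paper does not follow. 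Moreover, for the linearized system $\phi\equiv 0$ is a solution, so comparison from $\psi\geq 0$ immediately gives $\phi^\epsi\geq 0$ (your $\Theta$-sign-at-the-minimum observation is the correct justification); for the nonlinear system $0$ is not in general a subsolution, so the paper's assertion ``by comparison, $z^\epsi>0$'' also requires more care. In short, you have reproduced the intended argument in the form in which it actually closes, effectively repairing a (likely typographical) slip in the paper's \eqref{RAJ}. Your remarks on regularizing the Dirac terminal datum and recovering $t=1$ by weak-$*$ continuity are the standard caveats and apply equally to both versions.
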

 
 \begin{proof}
 First, we prove that $\sigma^\epsi$ is nonnegative. Let $z^\epsi$ solve
 \begin{equation}\label{RAJ}
\begin{cases}
&\epsi z^\epsi_t+H(x,Dz^\epsi,i)+\Theta z^\epsi=\epsi^4 \Delta z^\epsi \quad\rm{in}\  \Tt^d\times (s,1)\times I, \\
&z(x,s,i)=\psi(x,i) \quad \rm{in} \ \Tt^d\times I,
\end{cases}
\end{equation}
where $s\in [0,T]$, $\psi(\cdot,i) \in C^\infty(\Tt^d)$ with $\psi >0$. By comparison, $z^\epsi>0$ in $\Tt^d\times [s,1] \times I$. Next, we multiply \eqref{RAJ} by $\sigma^\epsi$ and \eqref{AJ} by $z^\epsi$, respectively. Adding each other and integrating over $\Tt^d\times I$, we have  
\[\frac{d}{dt}\int_{\Tt^d\times I} z^\epsi \sigma^\epsi \mbox{ }dxdi=0.\]
On the other hand, integrate the above over $[s,1]$ to yield
\begin{equation}\label{qq}
\int_{\Tt^d \times I} \sigma^\epsi(x,s,i)\psi(x,i) \mbox{ }dxdi=z^\epsi(x_0,1,k)>0.
\end{equation}
Since \eqref{qq} holds for any positive $\psi$, $\sigma^\epsi$ is not negative in $\Tt^d\times [0,1] \times I$.

 Next, integrate \eqref{AJ} over $\Tt^d\times I$ and use \eqref{sum}, to get
 \[ \epsi \frac{d}{dt}\int_{\Tt^d\times I} \sigma^\epsi \mbox{ }dxdi=\int_{\Tt^d\times I}-\epsi^4\Delta \sigma^\epsi-\mathrm{div}(\sigma^\epsi D_pH(x,Du^\epsi_2),i))+\Theta \sigma^\epsi \mbox{ }dxdi=0.\]
 Hence, for each $t\in [0,1]$,
 \[\int_{\Tt^d\times I} \sigma^\epsi(x,t,i) \mbox{ }dxdi=\int_{\Tt^d\times I} \sigma^\epsi(x,1,i) \mbox{ }dxdi=1.\]
 \end{proof} 
 
 \subsection{Infimum over holonomic measures}
 In this subsection, we recall the argument about the minimizing problem \eqref{mb}. In the following Proposition, we show that the value of \eqref{mb} is nonnegative. Later, we can see that \eqref{mb} is actually attained and its infimum is zero in Lemma \ref{mather}. 
 
 \begin{proposition}\label{infimum}
 Assume that \ref{convex} holds and the ergodic constant of \eqref{EP} is $0$. Then, we have
 \begin{equation}\label{inf}
 \int_{\Tt^d\times \Rr^d \times I}L(x,q,i)\mbox{ }d\mu \geq 0, 
 \end{equation}
 for all $\mu \in \mathcal{F}$.
 \end{proposition}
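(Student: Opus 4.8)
The plan is to test the holonomy constraint defining $\mathcal{F}$ against a smooth approximate subsolution of \eqref{EP} and then close the argument by convex duality. Since the ergodic constant of \eqref{EP} is $0$, the system admits a viscosity solution $v$, which by the a priori bound \eqref{Dv} is Lipschitz on $\Tt^d$ for each $i\in I$. Being Lipschitz, $v(\cdot,i)$ is differentiable almost everywhere, and at every point of differentiability the viscosity subsolution inequality holds classically, so
\[
H(x,Dv(x,i),i)+\Theta v(x,i)\leq 0 \qquad \text{for a.e. } x\in\Tt^d \text{ and every } i\in I .
\]
Note also that $L(x,q,i)\geq -H(x,0,i)\geq -C_0$, so $\int_{\Tt^d\times\Rr^d\times I} L\,d\mu$ is well defined with values in $(-\infty,+\infty]$ and there is nothing to prove if it equals $+\infty$.

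First I would regularize by mollification in $x$, setting $v^\delta:=\gamma^\delta*v$ as in \eqref{moli}. Since $\Theta$ acts only on the discrete variable $i$, it commutes with the mollification, i.e. $\Theta v^\delta=\gamma^\delta*(\Theta v)$. Using the convexity of $p\mapsto H(x,p,i)$ from \ref{convex} together with Jensen's inequality, and the growth bound \ref{growth} (which makes $H(\cdot,p,i)$ Lipschitz in $x$, uniformly for $|p|\leq\|Dv\|_{L^\infty}$, and hence absorbs the $O(\delta)$ error coming from freezing the state variable of $H$), I expect to obtain
\[
H(x,Dv^\delta(x,i),i)+\Theta v^\delta(x,i)\leq C\delta \qquad\text{for all }(x,i)\in\Tt^d\times I,
\]
with $C$ independent of $\delta$. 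This uniform approximate-subsolution estimate is the one genuinely technical step.

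To conclude, fix $\mu\in\mathcal{F}$ and apply the Fenchel--Young inequality with the choice $p=Dv^\delta(x,i)$:
\[
L(x,q,i)\ \geq\ q\cdot Dv^\delta(x,i)-H(x,Dv^\delta(x,i),i)\ \geq\ q\cdot Dv^\delta(x,i)+\Theta v^\delta(x,i)-C\delta .
\]
Since $v^\delta(\cdot,i)\in C^\infty(\Tt^d)$ is an admissible test function $\phi$ in the definition of $\mathcal{F}$, the holonomy constraint gives $\int_{\Tt^d\times\Rr^d\times I}\big(q\cdot Dv^\delta(x,i)+\Theta v^\delta(x,i)\big)\,d\mu=0$. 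Integrating the displayed inequality against $\mu$ therefore yields $\int L\,d\mu\geq -C\delta$, and letting $\delta\to0$ gives \eqref{inf}. The main obstacle is precisely the uniform estimate for $v^\delta$ above; once it is established, the remainder is a direct consequence of duality and the constraint defining $\mathcal{F}$.
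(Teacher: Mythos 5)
Your proposal is correct and follows essentially the same route as the paper's proof: mollify a Lipschitz viscosity solution $v$ to obtain a smooth $v^\delta$, use \ref{convex} and Jensen's inequality (together with the local Lipschitz continuity of $H$ in $x$ on bounded sets of momenta) to get the uniform approximate subsolution bound $H(x,Dv^\delta,i)+\Theta v^\delta\leq C\delta$, and then close with Fenchel--Young duality and the holonomy constraint defining $\mathcal{F}$, letting $\delta\to 0$ at the end. The remarks you add (boundedness of $L$ from below, almost-everywhere validity of the subsolution inequality for Lipschitz $v$) are implicit in the paper's argument, and your appeal to \ref{growth} plays the same role as the paper's $H\in C^2$ standing assumption in controlling the $O(\delta)$ error.
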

 
 \begin{proof}
 Let $v(x,i)$ be a Lipschitz continuous viscosity solution of \eqref{EP} and set $v^\delta$ as defined in \eqref{moli}. 
 Due to \ref{convex} and Jensen's inequality, for all $(x,i)\in \Tt^d\times I$, we get 
 \begin{align*}
 H(x,Dv^\delta(x,i),i)
 &=H\left(x,\int_{\Tt^d}\gamma^\delta(y)Dv(x-y,i)\mbox{ }dy,i\right)\\
 &\leq \int_{\Tt^d}H(x,Dv(x-y,i),i)\gamma^\delta(y)\mbox{ }dy \\
 &\leq \int_{\Tt^d}H(x-y,Dv(x-y,i),i)\gamma^\delta(y)\mbox{ }dy+C\delta \\
 &\leq \int_{\Tt^d}-\Theta v(x-y,i)\gamma^\delta(y)\mbox{ }dy +C\delta=-\Theta v^\delta(x,i)+C\delta.
 \end{align*}
 For any $\mu \in \mathcal{F}$, we have
\begin{align*}
 \int_{\Tt^d\times \Rr^d \times I}C\delta \mbox{ }d\mu 
 &\geq  \int_{\Tt^d\times \Rr^d \times I}H(x,Dv^\delta,i)+\Theta v^\delta \mbox{ }d\mu(x,q,i) \\
 &\geq \int_{\Tt^d\times \Rr^d \times I} -L(x,q,i)+q\cdot Dv^\delta+\Theta v^\delta \mbox{ }d\mu(x,q,i)  \\
  &= \int_{\Tt^d\times \Rr^d \times I}-L(x,q,i) \mbox{ }d\mu(x,q,i),
 \end{align*}
 where we used the property of $\mathcal{F}$ in the last equality. Sending $\delta \to 0$, we get \eqref{inf}.
 \end{proof}

\section{proof of main theorem}
Under the above estimates, we prove the main result. The following argument is introduced in \cite{MT} to study comparison principle with respect to Mather measures for a single equation.
\begin{proof}[Proof of Theorem \ref{result}]
Let $u^{\epsi}_1$ and $u^{\epsi}_2$ be the solution of \eqref{CPR} with $l=1,2$, respectively.
In view of \ref{convex}, we have
\begin{equation}\label{dd}
\epsi (u^{\epsi}_1-u^{\epsi}_2)_t+D_pH(x,Du^{\epsi}_2,i)\cdot D(u^{\epsi}_1-u^{\epsi}_2)+\Theta (u^{\epsi}_1-u^{\epsi}_2)\leq \epsi^4 \Delta( u^{\epsi}_1-u^{\epsi}_2).
\end{equation}
Take $x_0\in \Tt^d$ and $k\in I$. Let $\sigma_k^\epsi(\cdot,i)$ be the solution of \eqref{AJ}. Multiply \eqref{dd} by $\sigma_k^\epsi(\cdot,i)$ and integrate over $\Tt^d$ to obtain
\begin{align*}
0&\geq \int_{\Tt^d} \epsi(u^{\epsi}_1-u^{\epsi}_2)_t(\cdot,i)\sigma_k^\epsi(\cdot,i)+\Theta (u^{\epsi}_1-u^{\epsi}_2)(\cdot,i)\sigma_k^\epsi(\cdot,i) \mbox{ }dx\\
&\:\: -\int_{\Tt^d} \{\mathrm{div}(\sigma_k^\epsi(\cdot,i)D_pH(x,Du^{\epsi}_2(\cdot,i),i))+\epsi^4\Delta \sigma_k^\epsi(\cdot,i)\}(u^{\epsi}_1-u^{\epsi}_2)(\cdot,i) \mbox{ }dx\\
&= \int_{\Tt^d} \epsi(u^{\epsi}_1-u^{\epsi}_2)_t(\cdot,i)\sigma_k^\epsi(\cdot,i)+\Theta (u^{\epsi}_1-u^{\epsi}_2)(\cdot,i)\sigma_k^\epsi(\cdot,i) \mbox{ }dx\\
&\:\: +\int_{\Tt^d} \{ \epsi (\sigma_k^\epsi)_t(\cdot,i)-\Theta \sigma_k^\epsi(\cdot,i)\}(u^{\epsi}_1-u^{\epsi}_2)(\cdot,i) \mbox{ }dx.
\end{align*}
Integrating over $I$ and using \eqref{Theta}, we get
\begin{align*}
0&\geq \int_{\Tt^d\times I} \epsi(u^{\epsi}_1-u^{\epsi}_2)_t(\cdot,i)\sigma_k^\epsi(\cdot,i)+\Theta (u^{\epsi}_1-u^{\epsi}_2)(\cdot,i)\sigma_k^\epsi(\cdot,i) \mbox{ }dxdi\\
&\:\: +\int_{\Tt^d\times I} \{ \epsi (\sigma_k^\epsi)_t(\cdot,i)-\Theta \sigma_k^\epsi(\cdot,i)\}(u^{\epsi}_1-u^{\epsi}_2)(\cdot,i) \mbox{ }dxdi\\
&= \epsi \frac{d}{dt}\int_{\Tt^d\times I} (u^{\epsi}_1-u^{\epsi}_2)(\cdot,i)\sigma_k^\epsi(\cdot,i)\mbox{ }dxdi.
\end{align*}
Hence,
\begin{equation}\label{ff}
\int_{\Tt^d\times I} (u^{\epsi}_1-u^{\epsi}_2)(x,1,i)\sigma_k^\epsi(x,1,i)\mbox{ }dxdi\leq
\int_0^1\int_{\Tt^d\times I} (u^{\epsi}_1-u^{\epsi}_2)(x,t,i)\sigma_k^\epsi(x,t,i)\mbox{ }dxdidt.
\end{equation}

In light of Riesz theorem, there exists $\nu^{\epsi} \in P(\Tt^d \times \Rr^d \times I)$ such that for all $\psi \in C_c(\Tt^d \times \Rr^d \times I)$,
\begin{equation}\label{Riesz}
\int_{\Tt^d \times \Rr^d \times I} \psi(x,p,i)\mbox{ }d\nu^{\epsi}(x,p,i)=\int_0^1 \int_{\Tt^d\times I} \psi(x,Du^\epsi_2(x,t,i),i)\sigma_k^\epsi(x,t,i)\mbox{}dxdidt.
\end{equation}
Then, \eqref{ff} becomes
\begin{equation} \label{gg}
(u^{\epsi}_1-u^{\epsi}_2)(x_0,1,k) \leq \int_{\Tt^d\times \Rr^d \times I} (u^{\epsi}_1-u^{\epsi}_2)\mbox{ }d\nu^{\epsi}.
\end{equation}
Due to Proposition \ref{bernstein}, we can see that $\mathrm{supp}( \nu^\epsi) \subset \Tt^d \times B(0,C)\times I$. There exists $\{\epsi_j\}_{j \in \Nn} \to 0$ such that $ \nu^{\epsi_j} \rightharpoonup \nu \in P(\Tt^d \times \Rr^d \times I)$ as $j \to \infty$ weakly in the sense of measure. We set $\mu \in  P(\Tt^d \times \Rr^d \times I)$ such that the pushfoward measure of $\mu$ associated with $(x,q,i)\mapsto (x,D_qL(x,q,i),i)$ is $\nu$, that is, for all $\phi \in C_c(\Tt^d \times \Rr^d \times I)$,
\begin{equation}\label{mu}
\int_{\Tt^d \times \Rr^d \times I} \phi(x,p,i)\mbox{ }d\nu(x,p,i) =\int_{\Tt^d \times \Rr^d \times I} \phi(x,D_qL(x,q,i),i)\mbox{ }d\mu(x,q,i).
\end{equation}
Later, we prove that $\mu$ is a generalized Mather measure  in Lemma \ref{mather}.

By Proposition \ref{converge}, sending $j \to \infty$ in \eqref{gg}, we get
\begin{equation*}
v_1(x_0,k)-v_2(x_0,k)\leq \int_{\Tt^d\times \Rr^d \times I} (v_1-v_2) \mbox{ }d\mu ,
\end{equation*}
which finishes the proof.
\end{proof}

\begin{lem}\label{mather}
Assume that \ref{convex}-\ref{sym} hold. Let $\mu$ define as \eqref{mu}. Then, $\mu$ is a generalized Mather measure. 
\end{lem}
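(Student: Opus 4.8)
The plan is to verify the two defining properties of a generalized Mather measure for $\mu$: first, that $\mu \in \mathcal{F}$, i.e. $\mu$ is a Radon probability measure on $\Tt^d\times\Rr^d\times I$ satisfying the holonomy constraint against all test functions $\phi(\cdot,i)\in C^1(\Tt^d)$; and second, that $\mu$ attains the infimum in \eqref{mb}, which by Proposition \ref{infimum} and the fact that $\lambda=0$ amounts to showing $\int L\, d\mu \le 0$. Throughout I would work along the subsequence $\epsi_j\to 0$ from the proof of Theorem \ref{result}, for which $\nu^{\epsi_j}\rightharpoonup\nu$ and $\mathrm{supp}(\nu^{\epsi_j})\subset\Tt^d\times B(0,C)\times I$ uniformly (Proposition \ref{bernstein}); the uniform compact support is what makes all the weak limits below legitimate even though $L$ and $D_qL$ are only continuous.

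\textbf{Step 1: the holonomy constraint.} Fix $\phi(\cdot,i)\in C^1(\Tt^d)$ for each $i\in I$. Multiply the adjoint equation \eqref{AJ} by $\phi$, integrate over $\Tt^d\times I\times[0,1]$, integrate by parts in $x$ (moving $\mathrm{div}$ and $\Delta$ onto $\phi$), use \eqref{Theta} to move $\Theta$ onto $\phi$, and use Proposition \ref{l1} (mass preservation) together with the terminal data $\sigma^\epsi(\cdot,1,i)=\gamma_{ik}\delta_{x_0}$ for the boundary term in $t$. After dividing by the appropriate power of $\epsi$, the leading-order identity reads
\[
\int_0^1\!\!\int_{\Tt^d\times I}\Big( D_pH(x,Du^\epsi_2,i)\cdot D\phi(x,i)+\Theta\phi(x,i)\Big)\sigma^\epsi_k\,dx\,di\,dt = \epsi^3\!\int_0^1\!\!\int_{\Tt^d\times I}\Delta\phi\,\sigma^\epsi_k + \text{(terminal/initial terms)},
\]
where the right side is $O(\epsi)$ (using $\|\sigma^\epsi_k\|_{L^1}=1$ and, for the $t$-boundary term, $\|\epsi(u^\epsi)_t\|_\infty\le C$ is not even needed — one gets a clean $\epsi\,[\,\cdot\,]_0^1$ that vanishes after division, or a $\phi$-independent contribution one tracks carefully). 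By the Riesz representation \eqref{Riesz}, the left side equals $\int_{\Tt^d\times\Rr^d\times I} D_pH(x,p,i)\cdot D\phi(x,i)+\Theta\phi(x,i)\,d\nu^\epsi(x,p,i)$. Now pass to the limit along $\epsi_j$: since $\nu^{\epsi_j}\rightharpoonup\nu$ and the integrand is continuous and compactly supported in $p$ (as $\mathrm{supp}\,\nu^{\epsi_j}$ is uniformly bounded), the left side converges to $\int D_pH(x,p,i)\cdot D\phi+\Theta\phi\,d\nu$. Finally rewrite via \eqref{mu}: on the support of the relevant measures, $p=D_qL(x,q,i)$ is the Legendre conjugate variable, so $D_pH(x,D_qL(x,q,i),i)=q$ by the standard Legendre duality under \ref{convex}–\ref{coercive}. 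Hence the limit is $\int_{\Tt^d\times\Rr^d\times I} q\cdot D\phi(x,i)+\Theta\phi(x,i)\,d\mu(x,q,i)=0$, which is exactly the constraint defining $\mathcal{F}$.

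\textbf{Step 2: minimality.} I would estimate $\int L\,d\mu$ from above by testing with a solution of \eqref{EP}. Using the defining relation $L(x,q,i)=D_qL(x,q,i)\cdot q - H(x,D_qL(x,q,i),i)$ (the Legendre identity, valid at $q$ in the support) and \eqref{mu},
\[
\int_{\Tt^d\times\Rr^d\times I} L(x,q,i)\,d\mu = \int_{\Tt^d\times\Rr^d\times I}\big( p\cdot q - H(x,p,i)\big)\,d\nu\Big|_{p=D_qL} = \lim_{j\to\infty}\int_0^1\!\!\int_{\Tt^d\times I}\!\big(Du^{\epsi_j}_2\cdot q^{\epsi_j} - H(x,Du^{\epsi_j}_2,i)\big)\sigma^{\epsi_j}_k,
\]
but rather than chase this form, the cleaner route is: multiply the equation for $u^\epsi_2$, namely \eqref{CPR} with $l=2$, by $\sigma^\epsi_k$ and integrate over $\Tt^d\times I\times[0,1]$; after the same integration-by-parts bookkeeping as in Step 1 (now with the full nonlinearity $H$, not its linearization) and using $H(x,p,i)\ge p\cdot D_pH(x,p,i) - L(x,D_pH(x,p,i),i)$ — an equality, in fact, by convexity — one extracts
\[
\int_0^1\!\!\int_{\Tt^d\times I} L(x,D_pH(x,Du^\epsi_2,i),i)\,\sigma^\epsi_k \le \epsi\big(u^{\epsi}_2(x_0,1,k)-v^{\epsi^4}_2(x_0,k)\big) + O(\epsi^3) = O(\epsi),
\]
using $\|\epsi (u^\epsi_2)_t\|_\infty\le C$ from Lemma \ref{bdd1} (equivalently, the a priori bound on $u^\epsi_2-v^{\epsi^4}_2$ established there). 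The left side is $\int L(x,p,i)\,d\nu^\epsi|_{p=D_pH(x,Du^\epsi_2,i)}$; since $L$ is continuous and $\nu^{\epsi_j}$ has uniformly compact support, letting $j\to\infty$ gives $\int L\,d\mu\le 0$. Combined with Proposition \ref{infimum} this yields $\int L\,d\mu = 0 = \inf_{\mathcal{F}}$, so $\mu\in\tilde{\mathcal{M}}$.

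\textbf{Main obstacle.} The delicate point is Step 2 (and the analogous error control in Step 1): keeping precise track of which integration-by-parts boundary terms survive and showing they are genuinely $O(\epsi)$ after dividing by $\epsi$. This needs the uniform Lipschitz bound (Proposition \ref{bernstein}) so that all $p$-integrals see only a fixed compact set — crucial because $L$ can grow superlinearly — together with the $\|\epsi\partial_t u^\epsi\|_\infty\le C$ bound from Lemma \ref{bdd1} to kill the time-boundary contribution, and the $\epsi^4\|\Delta\cdot\|$-type control so the viscosity terms contribute only $O(\epsi^3)$. The passage from $\nu$ to $\mu$ via the Legendre change of variables is routine once one notes that on $\mathrm{supp}\,\nu$ the momenta are exactly $D_qL$ of the velocities, so $H$ and $L$ exchange roles by strict duality under \ref{convex}–\ref{coercive}.
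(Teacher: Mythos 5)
Your proposal follows essentially the same route as the paper: test the adjoint equation \eqref{AJ} against $\phi$ to extract the holonomy constraint via Riesz/weak-$*$ limits and Legendre duality, then test the viscous Cauchy equation \eqref{CPR} against $\sigma^\epsi_k$ to show $\int L\,d\mu = 0$ (the paper gets the exact equality directly from the Legendre identity, where you write an inequality that you note is in fact an equality; either way Proposition \ref{infimum} finishes the job). The minor bookkeeping discrepancies — e.g. the initial-time boundary term is $\epsi\int v^{\epsi^4}_2(x,i)\sigma^\epsi_k(x,0,i)\,dx\,di$ rather than $\epsi v^{\epsi^4}_2(x_0,k)$, and the paper first approximates $\phi$ by smooth $\phi_n$ — do not change the argument, and your emphasis on the uniform compact support from Proposition \ref{bernstein} correctly identifies what legitimates the weak limits.
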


\begin{proof}
Fix $\phi(\cdot,i) \in C^1(\Tt^d )$. Because $C^\infty(\Tt^d)$ is dense in $C^1(\Tt^d)$, there exists $\{ \phi_n(\cdot,i)\}_{n \in \Nn} \subset C^\infty(\Tt^d)$ satisfying $\phi_n(\cdot,i) \to \phi(\cdot,i)$ in $C^1(\Tt^d)$. Multiply \eqref{AJ} by $\phi_n(\cdot,i)$ and integrate over $\Tt^d\times (0,1) \times I$ to imply
\begin{align}\label{hh}
&\int_0^1 \int_{\Tt^d\times I} D_pH(x,Du^{\epsi}_2(\cdot,i),i)\cdot D\phi_n\sigma_k^\epsi(\cdot,i)\mbox{ }dxdidt
-\int_0^1 \int_{\Tt^d\times I} \phi_n \Theta \sigma_k^\epsi(\cdot,i)\mbox{ }dxdidt\\ \notag
&=-\epsi \int_{\Tt^d\times I} \phi_n\sigma_k^\epsi(x,0,i) \mbox{ }dxdi+\epsi \phi_n(x_0,k)+\epsi^4 \int_0^1 \int_{\Tt^d\times I}\Delta \phi_n \sigma_k^\epsi(\cdot,i)\mbox{ }dxdidt.
\end{align}
By \eqref{Riesz} and \eqref{Theta},
we can rewrite \eqref{hh} as 
\begin{align*}
&\int_{\Tt^d \times \Rr^d \times I} D_pH(x,p,i)\cdot D\phi_n-\Theta \phi_n\mbox{ }d\nu^{\epsi}(x,p,i)\\
&=-\epsi \int_{\Tt^d\times I} \phi_n\sigma_k^\epsi(x,0,i) \mbox{ }dxdi+\epsi \phi_n(x_0,k)+\epsi^4 \int_0^1 \int_{\Tt^d\times I}\Delta \phi_n \sigma_k^\epsi(\cdot,i)\mbox{ }dxdidt.
\end{align*}
Sending $\epsi=\epsi_j \to 0$ and $n \to \infty$, because of Proposition \ref{l1}, the right hand side goes to $0$. Thus, we get
\begin{equation*}
\int_{\Tt^d \times \Rr^d \times I} D_pH(x,p,i)\cdot D\phi-\Theta \phi \mbox{ }d\nu(x,p,i)
=0.
\end{equation*}
On the other hand, by the definition of $\mu$, we have
\[
\int_{\Tt^d \times \Rr^d \times I} D_pH(x,p,i)\cdot D\phi-\Theta \phi \mbox{ }d\nu(x,p,i)=\int_{\Tt^d \times \Rr^d \times I} q\cdot D\phi(x,i)+\Theta \phi(x,i)\mbox{ }d\mu(x,q,i).\]
Hence, $\mu \in \mathcal{F}$ and it suffices to show $\mu$ is a minimizer of \eqref{mb}. We rewrite \eqref{CPR} as  
\begin{align*}
&\epsi (u^{\epsi}_2)_t(\cdot,i)+D_pH(x,Du^{\epsi}_2(\cdot,i),i)\cdot Du^{\epsi}_2(\cdot,i)-\epsi^4 \Delta u^{\epsi}_2(\cdot,i)+\Theta u^{\epsi}_2(x,i)\\
&=D_pH(x,Du^{\epsi}_2(\cdot,i),i)\cdot Du^{\epsi}_2(\cdot,i)-H(x,Du^{\epsi}_2(\cdot,i),i).
\end{align*}
Multiply this by $\sigma_k^\epsi(\cdot,i)$ and integrate over $\Tt^d\times(0,1)\times I$ to yield
\begin{align*}
&\epsi u^{\epsi}_2(x_0,1,k)-\epsi \int_{\Tt^d\times I} u^{\epsi}_2(x,0,i)\sigma_k^\epsi(x,0,i)\mbox{ }dxdi\\
&=\int_0^1 \int_{\Tt^d\times I}\{D_pH(x,Du^{\epsi}_2(\cdot,i),i)\cdot Du^{\epsi}_2(\cdot,i)-H(x,Du^{\epsi}_2(\cdot,i),i)\}\sigma_k^\epsi(\cdot,i) \mbox{ }dxdidt.
\end{align*}
In light of \eqref{Riesz} and letting $\epsi=\epsi_j \to 0$, we get
\begin{equation}\label{zerovalue}
0=\int_{\Tt^d\times \Rr^d \times I}(D_pH(x,p,i)\cdot p-H(x,p,i))\mbox{ }d\nu(x,p,i)=\int_{\Tt^d\times \Rr^d \times I}L(x,q,i)\mbox{ }d\mu(x,q,i).
\end{equation}
In view of \eqref{inf}, $\mu$ is a minimizer of \eqref{mb}.
\end{proof}

Combining Proposition \ref{infimum} and \eqref{zerovalue}, we can see the following fact.
\begin{corollary}\label{mbp}
Assume that \ref{convex}-\ref{sym} holds and the ergodic constant of \eqref{EP} is $0$. Then, we have
 \begin{equation*}
 \int_{\Tt^d\times \Rr^d \times I}L(x,q,i)\mbox{ }d\mu = 0, 
 \end{equation*}
 for all $\mu \in \mathcal{\tilde M}$.
\end{corollary}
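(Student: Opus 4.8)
The plan is to combine the two one-sided bounds that are already in place. First I would note that Proposition \ref{infimum} gives the lower bound
\[
\int_{\Tt^d\times\Rr^d\times I} L(x,q,i)\,d\mu \ge 0
\]
for every $\mu\in\mathcal F$, and since every generalized Mather measure is by definition an element of $\mathcal F$ (indeed a minimizer of \eqref{mb}), this lower bound holds in particular for all $\mu\in\tilde{\mathcal M}$. So the only thing left is a matching upper bound, i.e.\ to check that the infimum in \eqref{mb} does not exceed $0$.

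For that upper bound I would invoke the construction carried out in the proof of Theorem \ref{result} together with Lemma \ref{mather}: the measure $\mu$ defined by the pushforward relation \eqref{mu} belongs to $\mathcal F$ and satisfies \eqref{zerovalue}, namely $\int_{\Tt^d\times\Rr^d\times I} L\,d\mu = 0$. Hence $\inf_{\mu\in\mathcal F}\int L\,d\mu \le 0$, and combining this with Proposition \ref{infimum} forces the infimum to be exactly $0$. In particular the infimum is attained, so $\tilde{\mathcal M}\neq\emptyset$ and this constructed $\mu$ is one of its elements.

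Finally, for an arbitrary $\mu\in\tilde{\mathcal M}$: by definition $\mu$ is a minimizer of \eqref{mb}, so $\int_{\Tt^d\times\Rr^d\times I} L\,d\mu$ equals the common value of the infimum, which we have just identified as $0$. This yields the claimed identity.

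I do not expect a genuine obstacle here, since the statement is an immediate bookkeeping consequence of Proposition \ref{infimum} and equation \eqref{zerovalue}; the one point that genuinely needs the earlier work is the nonemptiness of $\tilde{\mathcal M}$ (equivalently, the attainment of the infimum), and that is exactly what the explicit adjoint-method construction behind \eqref{zerovalue} supplies.
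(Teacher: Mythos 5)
Your proposal is correct and follows exactly the argument the paper intends: the corollary is obtained by combining the lower bound of Proposition \ref{infimum} with the explicit measure from Lemma \ref{mather} satisfying \eqref{zerovalue}, which pins the infimum of \eqref{mb} to zero and hence the value of every minimizer. Your remark that the adjoint-method construction is what supplies nonemptiness of $\tilde{\mathcal M}$ (so the statement is not merely vacuous) is a sensible clarification consistent with the paper's reasoning.
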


\section{Example of Mather measure}\label{example}
The following weakly coupled PDE is an example introduced in \cite{davi}, \cite{cami2} and \cite{MT2}, for instance. 
Here, we use this to show an example of a generalized Mather measure.
Let $f:\Tt^d \times I \to [0,\infty)$ satisfying $\bigcap_{i=1}^m A_i \neq \emptyset$, where $A_i=\{x\in \Tt^d\: |\: f(x,i)=0\}$ for $i\in I$. Then, consider
\begin{align*}
\frac{1}{2}|Dv(x,i)|^2+\sum_{j=1}^m c_{ij}(v(x,i)-v(x,j))=f(x,i) \quad \mathrm{in}\ \Tt^d\times I,
\end{align*}where $v: \Tt^d \times I \to \Rr$ is unknown. Note that  in this setting the ergodic constant is zero. 

The above problem corresponds to \eqref{EP}, when $H(x,p,i)=\frac{1}{2}|p|^2-f(x,i)$.

\begin{proposition}
Let $x_0\in \bigcap_{i=1}^m A_i$. Then, $\mu:=  \frac{1}{m} \sum_{i=1}^m\delta_{(x_0,0,i)}$ is a generalized Mather measure.
\end{proposition}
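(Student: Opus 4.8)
The plan is to check directly the three requirements in the definition of a generalized Mather measure for $\mu=\frac1m\sum_{i=1}^m\delta_{(x_0,0,i)}$: that $\mu$ is a Radon probability measure on $\Tt^d\times\Rr^d\times I$, that $\mu\in\mathcal F$, and that $\mu$ attains the infimum in \eqref{mb}. The first requirement is immediate, since $\mu$ is a convex combination of Dirac masses of total mass one.

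For the holonomy constraint, fix $\phi(\cdot,i)\in C^1(\Tt^d)$. Every atom of $\mu$ has its $q$-coordinate equal to $0$, so $\int_{\Tt^d\times\Rr^d\times I} q\cdot D\phi(x,i)\,d\mu=0$. The remaining term is
\[
\int_{\Tt^d\times\Rr^d\times I}\Theta\phi(x,i)\,d\mu=\frac1m\sum_{i=1}^m\Theta\phi(x_0,i)=\frac1m\int_I\Theta\phi(x_0,i)\,di,
\]
which vanishes by \eqref{sum}. Hence $\int q\cdot D\phi(x,i)+\Theta\phi(x,i)\,d\mu=0$ for all such $\phi$, i.e. $\mu\in\mathcal F$.

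For minimality, I first record the Lagrangian: since $H(x,p,i)=\frac12|p|^2-f(x,i)$, one has $L(x,q,i)=\sup_{p\in\Rr^d}\{p\cdot q-\frac12|p|^2+f(x,i)\}=\frac12|q|^2+f(x,i)$, the supremum being attained at $p=q$. Because $x_0\in\bigcap_{i=1}^m A_i$ we have $f(x_0,i)=0$ for every $i\in I$, and therefore $L(x_0,0,i)=0$ for all $i$; consequently
\[
\int_{\Tt^d\times\Rr^d\times I}L(x,q,i)\,d\mu=\frac1m\sum_{i=1}^m L(x_0,0,i)=0.
\]
On the other hand, $H(\cdot,p,\cdot)$ is convex by \ref{convex} and the ergodic constant of this equation is $0$, so Proposition \ref{infimum} gives $\int L\,d\mu'\geq0$ for every $\mu'\in\mathcal F$. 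Thus $\inf_{\mathcal F}\int L\,d\mu'=0$ and this value is attained at $\mu$, so $\mu$ is a minimizer of \eqref{mb}, that is, a generalized Mather measure.

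Each step is a short direct computation, so there is no genuine obstacle here; the only point deserving a little care is that the test functions $\phi(\cdot,i)$ in the holonomy constraint are prescribed independently for each $i\in I$, but this is harmless because the constraint splits into the vanishing of the $q$-term (trivial, as $q=0$ on $\mathrm{supp}\,\mu$) and the vanishing of the $I$-average of $\Theta\phi$ (which is exactly \eqref{sum}).
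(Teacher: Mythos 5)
Your proof is correct and follows essentially the same route as the paper: verify $\mu\in\mathcal F$ by noting the $q$-term vanishes since $q=0$ on the support and the $\Theta$-term vanishes by \eqref{sum}, compute $\int L\,d\mu=0$ using $f(x_0,i)=0$, and invoke Proposition~\ref{infimum} for minimality. The only additions are the explicit Legendre transform computation and the remark that $\mu$ is a Radon probability measure, both of which the paper leaves implicit.
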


\begin{proof}
At first, we show that $\mu \in \mathcal{F}$. For $\phi(\cdot,i) \in C^1(\Tt^d)$, we get
\begin{align*}
\int_{\Tt^d \times \Rr^d \times I}q\cdot D\phi(x,i)d\mu(x,q,i)
=\frac{1}{m}\sum_{i=1}^m 0\cdot \phi(x_0,i)=0,
\end{align*}
and using \eqref{sum}, we can see
\begin{align*}
\int_{\Tt^d \times \Rr^d \times I}\Theta \phi(x,i)\mbox{ }d\mu(x,q,i)
=\frac{1}{m}\sum_{i=1}^m\Theta \phi(x_0,i)=0,
\end{align*}
which implies $\mu \in \mathcal{F}$. On the other hand, we have
\begin{align*}
\int_{\Tt^d \times \Rr^d \times I} L(x,q,i) \mbox{ }d\mu(x,q,i)&=\int_{\Tt^d \times \Rr^d \times I} \frac{1}{2}|q|^2+f(x,i) \mbox{ }d\mu(x,q,i)\\
&=\frac{1}{m} \sum_{i=1}^m f(x_0,i)=0.
\end{align*}
 In view of Proposition \ref{infimum}, $\mu$ is a minimizer of \eqref{mb}.
\end{proof}

\begin{remark}
We recall the case of single equation, that is $m=1$. For $n\in \Nn$, let $a_k\geq 0$ be constants satisfying $\sum_{k=1}^n a_k=1$. Then, $\mu=\sum_{k=1}^n a_k \delta_{(x_k,0,1)}$ is a Mather measure, where $x_k\in A_1$ for $k\in \{1,2,...,n\}$.
However, in the system case, because we generalize the holonomic condition for the minimizing problem \eqref{mb}, the above convex combinations are not generalized Mather measures in general.
\end{remark}

\vspace{3mm}
{\bf Acknowledgement.} The author would like to thank Professor Hiroyoshi Mitake for his helpful comments and suggestions.

\bibliographystyle{plain}

\bibliography{vakonomic}

\end{document}